\def\xto#1{\xrightarrow{#1}}
\newcommand{\nc}{\newcommand}
\newcommand{\myMO}[1]{{\fontshape{rm}{\textbf{#1}}}}
\DeclareMathOperator{\Grmod}{\myMO{Grmod}\hspace{+0.25ex}-\hspace{-0.25ex}}
\DeclareMathOperator{\Mod}{\myMO{Mod}\hspace{+0.25ex}-\hspace{-0.25ex}}
\DeclareMathOperator{\proj}{\myMO{proj.dim}}
\DeclareMathOperator{\gl}{\myMO{gl.dim}}
\nc{\Ch}{\operatorname{Ch}}
\nc{\sA}{{\mathscr A}}
\nc{\wt}{\widetilde} \nc{\bl}{\bullet} \nc{\al}{\alpha}
\nc{\sg}{\sigma} \nc{\vf}{\varphi} \nc{\om}{\omega}
\nc{\ve}{\varepsilon} \nc{\ol}{\overline} \nc{\lb}{\lambda}
\nc{\Lb}{\Lambda} \nc{\Gm}{\Gamma} \nc{\cP}{{\mathscr P}}
\nc{\sB}{{\mathscr B}}
\nc{\ul}{\underline} \nc{\os}{\overset} \nc{\us}{\underset}
\nc{\pa}{\partial} \nc{\wh}{\widehat} \nc{\sbs}{\subset} \nc{\br}{\breve}
\nc{\lra}{\longrightarrow} \nc{\all}{\allowdisplaybreaks}
\nc{\Ker}{\operatorname{Ker}} \nc{\Img}{\operatorname{Im}}
\nc{\Kan}{\operatorname{Kan}} \nc{\Hom}{\operatorname{Hom}}
\nc{\Imm}{\operatorname{Im}}   \nc{\Ho}{\operatorname{Ho}}
\nc{\Ext}{\operatorname{Ext}}    \nc{\Cone}{\operatorname{Cone}}
\nc{\pr}{\operatorname{pr}} \nc{\cls}{\operatorname{cls}}
\nc{\cof}{\operatorname{cof}}
\nc{\sSet}{\operatorname{\textbf{sSet}}}
\nc{\Map}{\operatorname{Map}}
\nc{\incl}{\operatorname{incl}}
\nc{\Hocolim}{\operatorname{Hocolim}}
\nc{\colim}{\operatorname{colim}}
\nc{\Endd}{\operatorname{End}}
\nc{\dimm}{\operatorname{dim}}
\nc{\const}{\operatorname{const}}
\nc{\inn}{\operatorname{in}}
\nc{\Ev}{\operatorname{Ev}}
\nc{\rr}{\operatorname{\textbf{r}}}
\nc{\cop}{\operatorname{\textbf{l}}}
\nc{\Sp}{\operatorname{Sp}}
\nc{\Comod}{\operatorname{\textbf{Comod}}}
\numberwithin{equation}{subsection}
\newtheorem{theo}[equation]{Theorem}
\newtheorem{lem}[equation]{Lemma}
\newtheorem{prop}[equation]{Proposition}
\newtheorem{coro}[equation]{Corollary}
\theoremstyle{definition}
\newtheorem{exmp}[equation]{Example}
\newtheorem{numbered paragraph}[equation]{}
\newcommand{\mybox}{\ensuremath \Box}
\begin{document}

\def\S{\textbf{S}}
\def\Z{{\mathbb Z}}
\def\L{{\mathcal L}}
\def\B{{\mathcal B}}
\def\Q{{\mathcal Q}}
\def\M{{\mathcal M}}
\def\D{{\mathcal D}}
\def\R{{\mathscr R}}
\def\E{{\mathcal E}}
\def\K{{\mathcal K}}
\def\W{{\mathcal W}}
\def\N{{\mathcal N}}
\def\T{{\mathcal T}}
\def\A{{\mathcal A}}
\def\C{{\mathcal C}}
\def\P{{\mathcal P}}
\def\V{{\mathcal V}}
\def\G{{\mathcal G}}
\let\xto\xrightarrow

\title[]
{On exotic equivalences and a theorem of Franke 
}

\author{Irakli Patchkoria}


\begin{abstract} Using Franke's methods we construct new examples of exotic equivalences. We show that for any symmetric ring spectrum $R$ whose graded homotopy ring $\pi_*R$ is concentrated in dimensions divisible by a natural number $N \geq 5$ and has homological dimension at most three, the homotopy category of $R$-modules is equivalent to the derived category of $\pi_*R$. The Johnson-Wilson spectrum $E(3)$ and the truncated Brown-Peterson spectrum $BP\langle 2 \rangle$ for any prime $p \geq 5$ are our main examples. If additionally the homological dimension of $\pi_*R$ is equal to two, then the homotopy category of $R$-modules and the derived category of $\pi_*R$ are triangulated equivalent. Here the main examples are $E(2)$ and $BP \langle 1 \rangle$ at $p \geq 5$. The last part of the paper discusses a triangulated equivalence between the homotopy category of $E(1)$-local spectra at a prime $p \geq 5$ and the derived category of Franke's model. This is a theorem of Franke and we fill a gap in the proof.
  
\end{abstract}

\subjclass[2010]{55P42, 18G55, 18E30}
\keywords{module spectrum, ring spectrum, stable model category, triangulated category}

\maketitle

\setcounter{section}{0}

\section{Introduction} 

\setcounter{subsection}{1}

Let $R$ be a symmetric ring spectrum and suppose that the homotopy ring $\pi_*R$ of $R$ is concentrated in dimensions divisible by some natural number $N \geq 2$. Further assume that the graded global homological dimension $\gl \pi_*R$ of $\pi_* R$ is less than $N$. Using Franke's methods from \cite{F96}, the paper \cite{Pat12} constructs a functor $\R \colon \D(\pi_* R) \to \Ho(\Mod R) $. Here $\Ho(\Mod R)$ is the \emph{homotopy category} of $\Mod R$, the model category of module spectra over $R$ and $\D(\pi_* R)$ is the \emph{derived category} of $\pi_* R$ which is by definition the homotopy category of differential graded $\pi_*R$-modules. The functor $\R$ has many interesting properties: It commutes with suspensions, is compatible with homology and homotopy groups, and restricts to an equivalence between the full subcategories of at most one-dimensional objects. In case when $\gl \pi_*R < N-1$, then $\R$ restricts to an equivalence between the full subcategories of at most two dimensional objects. As an 
application of the latter fact, one concludes that 
$\R \colon \D(\pi_* R) \to \Ho(\Mod R) $ is an equivalence of categories if $N \geq 4$ and $\gl \pi_*R \leq 2$. These facts are proved in \cite{Pat12}. In particular, the latter result improves a classification theorem by Wolbert \cite{W98}. 

This paper extends the results of \cite{Pat12} to homological dimension three. More precisely we prove the following:

\begin{theo} \label{main} Let $R$ be a symmetric ring spectrum such that $\pi_* R$ is concentrated in degrees divisible by a natural number $N \geq 2$ and assume that the graded global homological dimension of $\pi_*R$ is less than $N-1$. Then the functor 
 $\R \colon \D(\pi_* R) \to \Ho(\Mod R)$
restricts to an equivalence of the full subcategories of at most three dimensional objects.

\end{theo}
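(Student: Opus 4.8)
The plan is to prove separately the two halves of the claimed equivalence between the full subcategories of at most three dimensional objects. Write $\mathcal D_n$ for the full subcategory of at most $n$ dimensional objects (in $\D(\pi_*R)$ or in $\Ho(\Mod R)$, according to context). Since $\R$ preserves dimension (it is compatible with homology), it suffices to show that $\R$ is full and faithful on $\mathcal D_3$ and that every at most three dimensional object of $\Ho(\Mod R)$ is isomorphic to $\R X$ for some $X\in\mathcal D_3$. I will freely use, from \cite{Pat12}, that $\R$ commutes with $\Sigma$ and with homotopy and homology groups, and that $\R$ restricts to an equivalence on $\mathcal D_2$ (here the hypothesis $\gl\pi_*R<N-1$ enters), hence also on $\mathcal D_1$ and $\mathcal D_0$. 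I will also use that suspension preserves the dimension filtration, $\Sigma\mathcal D_n=\mathcal D_n$, as it merely reindexes the underlying periodic diagram.

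The essential ingredient, which I would isolate as a preliminary lemma, is that $\R$ is exact on the Postnikov truncation triangles that peel off a single homotopy module. Concretely: any $X\in\mathcal D_3$ sits in a distinguished triangle $W\to X\to Z\to\Sigma W$ with $Z\in\mathcal D_0$ a (suspended) $\pi_*R$-module — the top block of $H_*X$ — and $W\in\mathcal D_2$, and the claim is that $\R$ carries this triangle to a distinguished triangle in $\Ho(\Mod R)$. Although $\R$ is not a triangulated functor, on this restricted class of cofibre sequences it should be exact; I expect proving this to be the main obstacle. The natural route is to unwind the construction of $\R$ from \cite{Pat12} and Franke's model, compare the algebraically defined $k$-invariant $Z\to\Sigma W$ of $X$ with the topological $k$-invariant of $\R X$, and check that their difference — a secondary operation whose indeterminacy is a subquotient of $\Ext^{\ast}_{\pi_*R}(H_*Z,H_*W)$ — must vanish, using $\gl\pi_*R<N-1$ together with the concentration of $\pi_*R$ in degrees divisible by $N$.

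Granting the lemma, full faithfulness on $\mathcal D_3$ is a two-step d\'evissage over the known $\mathcal D_2$-case. Fix $X,Y\in\mathcal D_3$ and choose truncation triangles $W_X\to X\to Z_X\to\Sigma W_X$ and $W_Y\to Y\to Z_Y\to\Sigma W_Y$ as above, with $W_\bullet\in\mathcal D_2$ and $Z_\bullet\in\mathcal D_0$. First, for any $P\in\mathcal D_2$ and any $j$, applying $\Hom(\Sigma^jP,-)$ to the $Y$-triangle and to its $\R$-image and comparing the two long exact sequences by the five lemma shows that $\R\colon[\Sigma^jP,Y]\to[\Sigma^j\R P,\R Y]$ is an isomorphism, because $W_Y$, $Z_Y$ and all their suspensions lie in $\mathcal D_2$, where $\R$ is already fully faithful. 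Second, applying $\Hom(-,Y)$ to the $X$-triangle and to its $\R$-image and invoking the step just proved with $P=W_X$ and $P=Z_X$, a further five-lemma comparison gives that $\R\colon[\Sigma^jX,Y]\to[\Sigma^j\R X,\R Y]$ is an isomorphism for all $j$; for $j=0$ this is full faithfulness.

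For essential surjectivity, let $Y$ be an at most three dimensional object of $\Ho(\Mod R)$ with truncation triangle $W'\to Y\to Z'\to\Sigma W'$, where $Z'\in\mathcal D_0$ and $W'\in\mathcal D_2$. Using the equivalences on $\mathcal D_0$ and $\mathcal D_2$, write $Z'\cong\R Z$ and $W'\cong\R W$; since $\R$ is full on $\mathcal D_2$ and $\Sigma W\in\mathcal D_2$, lift the $k$-invariant $Z'\to\Sigma W'$ to a morphism $\kappa\colon Z\to\Sigma W$ in $\D(\pi_*R)$ with $\R\kappa$ the given $k$-invariant. Let $X$ be the object completing $W\to X\to Z\xrightarrow{\kappa}\Sigma W$ to a distinguished triangle; then $X\in\mathcal D_3$. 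By the lemma $\R$ takes this triangle to a distinguished triangle $\R W\to\R X\to\R Z\xrightarrow{\R\kappa}\Sigma\R W$, which under the chosen identifications has the same two outer vertices and the same connecting map as the truncation triangle of $Y$; hence $\R X\cong Y$. Together with the previous paragraph this proves the theorem.
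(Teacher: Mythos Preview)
There is a genuine gap in the first step. In this paper ``at most three dimensional'' means $\proj H_*X\le 3$ (respectively $\proj\pi_*X\le 3$), not that $X$ has a short Postnikov filtration. Your truncation triangle $W\to X\to Z\to\Sigma W$ with $Z\in\mathcal D_0$ and $W\in\mathcal D_2$ need not exist: peeling off the ``top block'' of $H_*X$ in the $\B[i]$--decomposition yields a $Z$ whose homology has no reason to be projective, and conversely, if $H_*Z$ is projective and $H_*X\to H_*Z$ is surjective then the sequence splits and forces $\proj H_*X=\proj H_*W\le 2$. So the d\'evissage never starts. Even under the more charitable reading where you replace your triangle by the resolution triangle $M'\to P\to M\to M'[1]$ (with $H_*P$ projective and $\proj H_*M'\le 2$), your ``preliminary lemma'' that $\R$ preserves it is not available: the preservation results at hand require either \emph{two} projective vertices (Lemma~\ref{presprojtr}) or a first vertex of projective dimension at most one with monic homology map (\cite[Lemma~6.2.4]{Pat12}), and this triangle satisfies neither. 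The $k$--invariant/secondary--operation sketch you offer does not bridge that gap.

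The paper's route is to resolve twice and invoke the octahedral axiom. From $M'\to P\to M$ as above one resolves again, $M''\to Q\to M'$ with $H_*Q$ projective and $\proj H_*M''\le 1$, and the octahedron produces two auxiliary triangles through a common object $C$: one $P\to C\to Q[1]\to P[1]$ with both $H_*P$ and $H_*Q$ projective, which $\R$ preserves by Lemma~\ref{presprojtr}; and one $M''[1]\to C\to M\to M''[2]$ with $\proj H_*M''\le 1$ and $H_*(M''[1])\hookrightarrow H_*C$, which $\R$ preserves by \cite[Lemma~6.2.4]{Pat12}. Two Five--lemma comparisons then give full faithfulness, and essential surjectivity is argued in the same pattern on the $\Ho(\Mod R)$ side. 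The point you are missing is precisely this octahedral splice: one cannot reduce from $\mathcal D_3$ to $\mathcal D_2$ in a single triangle that $\R$ is known to preserve.
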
 

As a consequence we obtain:

\begin{coro} \label{maincoro} Let $R$ be a symmetric ring spectrum such that $\pi_* R$ is concentrated in degrees divisible by a natural number $N \geq 5$ and assume that the graded global homological dimension of $\pi_*R$ is less than or equal to three. Then the functor 
 $\R \colon \D(\pi_* R) \to \Ho(\Mod R) $
is an equivalence of categories. 
\end{coro}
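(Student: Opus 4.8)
The plan is to derive Corollary~\ref{maincoro} from Theorem~\ref{main} by a formal argument, once one knows that under the hypotheses $N \geq 5$ and $\gl \pi_* R \leq 3$ there are no objects of dimension exceeding three on either side.

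First I would check that Theorem~\ref{main} applies: since $N \geq 5$, the assumption $\gl \pi_* R \leq 3$ gives $\gl \pi_* R \leq 3 < 4 \leq N - 1$, so $\gl \pi_* R < N - 1$. Thus $\R$ sends at most three dimensional objects to at most three dimensional objects and induces an equivalence between the corresponding full subcategories of $\D(\pi_* R)$ and $\Ho(\Mod R)$.

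The one substantial step is to show that \emph{every} object of $\D(\pi_* R)$, and every object of $\Ho(\Mod R)$, is at most three dimensional. On the algebraic side, the homology of an object $X \in \D(\pi_* R)$ breaks up, according to residues mod $N$, into graded $\pi_* R$-modules, each of projective dimension at most $\gl \pi_* R \leq 3$; since in addition $\gl \pi_* R < N$, Franke's construction of $\D(\pi_* R)$ builds $X$ out of its homology by a filtration of length at most $\gl \pi_* R$, so $X$ is at most three dimensional --- this is the dimension estimate already used in \cite{Pat12}. On the topological side, $\pi_* M$ of an $R$-module $M$ is a graded $\pi_* R$-module, again of projective dimension $\leq 3$, and the cellular resolutions of module spectra give in the same way that $M$ is at most three dimensional. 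Hence the full subcategory of at most three dimensional objects is all of $\D(\pi_* R)$, and likewise all of $\Ho(\Mod R)$.

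Combining the two steps, $\R$ is fully faithful on all of $\D(\pi_* R)$ (any two objects being at most three dimensional) and essentially surjective (any object of $\Ho(\Mod R)$ being at most three dimensional, hence in the essential image of the restriction provided by Theorem~\ref{main}), so $\R$ is an equivalence of categories. I expect the point needing the most care to be the dimension bound of the third paragraph --- passing from a projective-dimension bound on homotopy or homology to a genuine bound on the dimension of the object --- but this is precisely the kind of estimate available from \cite{Pat12}, so little extra work should be required.
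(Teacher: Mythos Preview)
Your overall approach is correct and matches the paper's (implicit) deduction: verify the hypothesis $\gl \pi_*R < N-1$ of Theorem~\ref{main}, then observe that every object on both sides is at most three dimensional, so that the full subcategories in Theorem~\ref{main} are the whole categories.

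However, your third paragraph overcomplicates what is in fact a tautology. In this paper, an object being ``at most three dimensional'' means precisely that the projective dimension of its homology (for $\D(\pi_*R)$) or its homotopy (for $\Ho(\Mod R)$) is at most $3$; compare Propositions~\ref{Rfulfaithful} and~\ref{Ressurj}, where the operative hypotheses are simply $\proj H_*M \leq 3$ and $\proj \pi_*X \leq 3$. Since $\gl \pi_*R \leq 3$ means by definition that \emph{every} graded $\pi_*R$-module has projective dimension at most $3$, the conclusion is immediate. No filtration argument, no appeal to Franke's construction, and no cellular resolutions are needed; the splitting of homology by residues mod $N$ and the filtration-length considerations you sketch are beside the point (and the phrase ``Franke's construction of $\D(\pi_*R)$'' is a bit off --- $\D(\pi_*R)$ is just the ordinary derived category of dg $\pi_*R$-modules; it is the functor $\R$ that is Franke's construction). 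Strip that paragraph down to the one-line observation and your proof is complete.
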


Examples to which this corollary applies are the Johnson-Wilson spectrum $E(3)$ and the truncated Brown-Peterson spectrum $BP\langle 2 \rangle$ at any prime $p \geq 5$.

The idea of the proof of Theorem \ref{main} is to combine Lemma \ref{presprojtr} below with the results of \cite{Pat12}. This is done using the octahedral axiom for triangulated categories.  

The proof of Theorem \ref{main} is very general and can be applied in many other contexts. In particular we get the following: Let $R$ be a symmetric ring spectrum such that $\pi_* R$ is concentrated in degrees divisible by a natural number $N \geq 5$ and assume that the graded global homological dimension of $\pi_*R$ is equal to two. Then there is an equivalence $$\R \colon \Ho((\Mod \pi_*R)^{\P}) \to \Ho((\Mod R)^{\P})$$ for any at most one dimensional poset $\P$. Here we just use that the homological dimension of the abelian category $(\Grmod \pi_*S)^{\P}$ is at most $3$. Using the results of \cite{Pat16}, one can see that these functors are compatible with the derivator structures. From this one obtains:

\begin{theo} \label{maintr} Let $R$ be a symmetric ring spectrum such that $\pi_* R$ is concentrated in degrees divisible by a natural number $N \geq 5$ and assume that the graded global homological dimension of $\pi_*R$ is equal to two. Then the equivalence 
 $\R \colon \D(\pi_* R) \to \Ho(\Mod R) $
is triangulated.
\end{theo}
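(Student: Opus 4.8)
The plan is to upgrade the equivalence $\R$ to a morphism of stable derivators and then appeal to the formal fact that a morphism of stable derivators which preserves finite homotopy colimits is exact on the underlying base categories. Recall that $\D(\pi_*R)$ is the value at the terminal category $e$ of the stable derivator $\mathbb{D}_{\pi_*R}$ of differential graded $\pi_*R$-modules, whose value on a small category $I$ is $\D\big((\Grmod\pi_*R)^{I}\big)$, and that $\Ho(\Mod R)$ is the value at $e$ of the stable derivator $\mathbb{H}_R$ represented by the model category $\Mod R$, with $\mathbb{H}_R(I)=\Ho\big((\Mod R)^{I}\big)$; thus $\R=\mathbb{R}(e)$ for the prospective morphism $\mathbb{R}\colon\mathbb{D}_{\pi_*R}\to\mathbb{H}_R$. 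Since $\R$ is already known to commute with $\Sigma$ and to be an equivalence of categories, by \cite{Pat12} and Corollary~\ref{maincoro}, it suffices to show that $\R$ carries distinguished triangles to distinguished triangles, and for this it is enough to show that $\R$ preserves homotopy cocartesian squares.

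For every at most one dimensional poset $\P$ the abelian category $(\Grmod\pi_*R)^{\P}$ has graded global homological dimension at most $\gl\pi_*R+1=3$, hence strictly less than $N-1$ since $N\ge 5$. Therefore, as noted above, the proof of Theorem~\ref{main} applies with $\pi_*R$ replaced by $(\Grmod\pi_*R)^{\P}$ and $\Mod R$ replaced by $(\Mod R)^{\P}$, and produces an equivalence
$$\R_{\P}\colon \Ho\big((\Mod\pi_*R)^{\P}\big)\xrightarrow{\ \sim\ }\Ho\big((\Mod R)^{\P}\big),$$
which is the value $\mathbb{R}(\P)$ of the prospective morphism and which reduces to $\R$ when $\P$ is a point. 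It is precisely because the diagram abelian categories still have homological dimension below $N-1$ that Franke's construction is simultaneously available on all of them.

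One now invokes \cite{Pat16} to see that this family is compatible with the derivator structures: the $\R_{\P}$ are natural, up to coherent isomorphism, with respect to the restriction functors, and they commute with the homotopy Kan extensions that enter the stable structure, so that $\R$ underlies a cocontinuous morphism of stable derivators. Applying $\mathbb{R}$ to the homotopy cocartesian square with two opposite vertices zero that computes $\Sigma$, and to the cocartesian squares whose boundary data are the distinguished triangles, we conclude that $\R=\mathbb{R}(e)$ sends homotopy cocartesian squares to homotopy cocartesian squares. Hence $\R$ preserves distinguished triangles; together with its compatibility with $\Sigma$ and with Corollary~\ref{maincoro} this shows that $\R$ is a triangulated equivalence.

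The main obstacle is the coherence claimed in the previous paragraph. One must check that the levelwise equivalences $\R_{\P}$ are the components of an honest (cocontinuous) morphism of derivators --- natural with respect to restriction and compatible with the homotopy Kan extensions computing cofiber sequences --- even though the equivalence property is only known in the at most one dimensional range; concretely this forces one either to reduce the computation of homotopy cocartesian squares to diagrams of that dimension, or to extend $\mathbb{R}$ to larger diagram shapes as a (no longer invertible) morphism that still respects the relevant Kan extensions. Establishing this is the technical content imported from \cite{Pat16}; granting it, the conclusion is a formal consequence of the theory of stable derivators.
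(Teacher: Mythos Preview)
Your proposal is correct and follows essentially the same strategy as the paper: one uses that for $\dim\P\le 1$ the abelian category $(\Grmod\pi_*R)^{\P}$ has global dimension at most $3<N-1$, so Theorem~\ref{main} yields equivalences $\R_{\P}$, and then invokes \cite{Pat16} to see that these are compatible with the derivator structures and with the relevant homotopy Kan extensions, whence $\R$ preserves distinguished triangles. The only cosmetic difference is that the paper phrases the last step directly in terms of mapping cone sequences being describable via restrictions and Kan extensions along maps of at most one-dimensional posets (so the two-dimensional square never appears), whereas you first speak of preserving homotopy cocartesian squares and then, in your final paragraph, correctly flag and outsource to \cite{Pat16} exactly this reduction.
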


This theorem improves some of the results of \cite{Pat16}. The main examples we have here are $BP\langle 1 \rangle$ and $E(2)$ for a prime $p \geq 5$. 

Methods (which are due to Franke \cite{F96}) used in this paper as well as in \cite{Pat12, Pat16} are very general. In particular, they also apply to $E(1)$-local spectra at a prime $p \geq 5$. The last section of the paper discusses a triangulated equivalence between the homotopy category of $E(1)$-local spectra at a prime $p \geq 5$ and the derived category of certain twisted chain complexes of $E(1)_*E(1)$-comodules. This fact is a well-known theorem by Franke \cite{F96} (see also \cite{R08}). The proof of it in \cite{F96} contains a gap. It is not clear from \cite{F96} that the constructed functor $\R$ preserves Adams resolutions which is necessary to compare Adams spectral sequences computing morphisms in the homotopy categories in question. Lemmas 6.2.1 and 6.2.4 of \cite{Pat12} and Lemma \ref{presprojtr} below translated to the context of comodules fill this gap for $E(1)$-local spectra for $p \geq 5$. However, in more general cases the gap still has to be filled. 

We do not provide proofs in the last section as they are very much identical to the proofs in \cite{Pat12, Pat16} and in Section \ref{mainsec}. For convenience of the reader we also prefer to give details in terms of module spectra and graded modules, rather than in terms of comodules and twisted chain complexes.  

\subsection*{Acknowledgements} I would like to thank Tobias Barthel, Jens Franke, Constanze Roitzheim, Tomer Schlank and Nathaniel Stapleton for useful conversations.   

This research was supported by the Danish National Research Foundation through the Centre for Symmetry and Deformation (DNRF92), the Shota Rustaveli Georgian National Science Foundation grant DI/27/5-103/12 and the German Research Foundation Schwerpunktprogramm 1786. 

\setcounter{subsection}{1}

\section{Preliminaries} 

\subsection{Graded homological algebra}

Let $B$ be a graded ring. We will denote by $\Grmod B$ the category of graded (right) $B$-modules. This category is an abelian category and has enough projective objects. For any graded $B$-module $M$, let $\proj M$ denote the (graded) \emph{projective dimension} of $M$ in $\Grmod B$. The \emph{graded global homological dimension} of $B$, denoted by $\gl B$, is the supremum of projective dimensions of objects in $\Grmod B$.

Next, let $N \geq 2$ be an integer. We say that a graded ring $B$ is $N$-\emph{sparse} if $B$ is concentrated in degrees divisible by $N$, i.e., $B_k=0$ if $k$ is not congruent to $0$ modulo $N$. 

If $A$ is a graded ring, then $\Mod A$ will denote the category of differential graded (right) modules over $A$.

\subsection{Stable model categories and triangulated categories}

We will freely use in this paper the language of triangulated categories (see e.g., \cite{GM96}) and model categories (see e.g., \cite{Q67, DS95, H99}). We will also assume that the reader is familiar with the notational conventions from \cite{Pat12} and \cite{Pat16}

Recall that a model category $\M$ is called a \emph{stable model category} if the \emph{suspension functor} $\Sigma \colon \Ho(\M) \to \Ho(\M)$ is an equivalence of categories. For any stable model category $\M$, the homotopy category $\Ho(\M)$ is triangulated with $\Sigma$ the suspension functor \cite[Proposition 7.1.6]{H99}. The distinguished triangles come from mapping cone sequences of the form
$$\xymatrix{X \ar[r]^f & Y \ar[r]^-{j} & \Cone(f) \ar[r]^\pa & \Sigma X,}$$
where $f$ is a morphism in $\M$ between cofibrant objects (cf. \cite[Section 2.4]{Pat12}).

Similarly, the triangulated structure on the derived category $\D(A)$ of a graded ring $A$ comes from the usual shift functor and algebraic mapping cone sequences
$$\xymatrix{ M \ar[r]^f & M' \ar[r]^\iota & C(f) \ar[r]^\partial & M[1],}$$
where $f$ is a chain map of differential graded $A$-modules (cf. \cite[Remark 2.4.8]{Pat12}).

For objects $X$ and $Y$ in a stable model category $\M$, the notation $[X,Y]$ will always stand for the abelian group of morphisms from $X$ to $Y$ in the homotopy category $\Ho(\M)$.

\subsection{Franke's functor}

In this subsection we review the construction of Franke's functor. We mainly follow \cite[Section 3.3]{Pat12} where the reader can also look up necessary details.

Let $\M$ be a simplicial stable model category and $S$ a compact generator of $\Ho(\M)$. Suppose that the graded ring $\pi_*S=[S,S]_*$ is $N$-sparse for $N \geq 2$ and the graded global homological dimension of $\pi_*S$ is less than $N$. The category $\Grmod \pi_*S$ splits as follows
$$\Grmod{\pi_*S} \sim \B \oplus \B[1] \oplus \ldots \oplus \B[N-1],$$ 
where $\B$ is the full subcategory of $\Grmod{\pi_*S}$ consisting of all those modules which are concentrated in degrees divisible by $N$. Recall that $\C_N$ denotes the poset
$$\xymatrix{\zeta_0
  \ar@{<-}[drrrr]%
    |<<<<<<<<<<<<{\text{\Large \textcolor{white}{$\blacksquare$}}}%
    |<<<<<<<<<<<<<<<<<<<<<{\text{\Large \textcolor{white}{$\blacksquare$}}}
    |>>>>>>>>>>>>>>>>>>>>>>>{\text{\Large \textcolor{white}{$\blacksquare$}}}
    |>>>>>>>>>>>>>>{\text{\Large \textcolor{white}{$\blacksquare$}}}
    |>>>>>>>>>>{\text{\Large \textcolor{white}{$\blacksquare$}}}%
& \zeta_1 & \ldots & \zeta_{N-2} & \zeta_{N-1} \\ \beta_0 \ar[u] \ar[ur] & \beta_1 \ar[u] \ar[ur] & \ldots \ar[ur] & \beta_{N-2} \ar[u] \ar[ur] & \beta_{N-1}. \ar[u] }$$
For a general diagram $X \in \M^{\C_N}$, we denote by
$$l_i \colon X_{\beta_i} \to X_{\zeta_i}, \;\;\;\;\;\;\; k_i \colon X_{\beta_{i-1}} \to X_{\zeta_i}, \;\;\;\;\;\;\; i \in \Z/N\Z,$$
the structure morphisms of $X$. 

Define $\L$ to be the full subcategory of $\Ho(\M^{\C_N})$ consisting of those diagrams $X \in \Ho(\M^{\C_N})$ which satisfy the following conditions ($\pi_*(-)$ denotes $[S,-]_*$):

\;

\;

\rm (i) The objects $X_{\beta_i}$ and $X_{\zeta_i}$ are cofibrant in $\M$ for any $i \in \Z/N\Z$;

\rm (ii) The graded $\pi_*S$-modules $\pi_*X_{\beta_i}$ and $\pi_*X_{\zeta_i}$ are objects of $\B[i]$ for any $i \in \Z/N\Z$;

\rm (iii) The map $\pi_*l_i \colon \pi_*X_{\beta_i} \to \pi_*X_{\zeta_i}$ is injective for any $i \in \Z/N\Z$.

\;

\;

Next, we recall the functor $\Q \colon \L \to \Mod  \pi_*S$. As a graded module it is given by 
$$\bigoplus_{i \in \Z/N\Z} \pi_*(\Cone(k_i)).$$ 
The differential is defined using the long exact sequences of mapping cones (see Section 3.3 of \cite{Pat12}).

Let $\K$ denote the full subcategory of $\L$ consisting of those objects $X$ which satisfy
$$\proj \pi_*X_{\beta_i} < N-1, \;\;\;\;\;\;\; \proj \pi_*X_{\zeta_i} < N-1$$
for all $i \in \Z/N\Z$. The following is proved in \cite[Section 3.3]{Pat12}:

\begin{prop} \label{Qequi} Let $\M$ be a simplicial stable model category with a compact generator $S$ of $\Ho(\M)$. Suppose that the graded ring $\pi_*S=[S,S]_*$ is $N$-sparse for $N \geq 2$ and the graded global homological dimension of $\pi_*S$ is less than $N$. Then the restricted functor 
$$\Q\vert_{ \K} \colon \K \to \Mod  \pi_*S$$ 
is fully faithful and a differential graded module $(C,d)$ is in the essential image of $\Q$ if and only if 
$$\proj \Ker d <N-1 \;\;\;\;\; \text{and} \;\;\;\;\; \proj \Imm d < N-1.$$ 
In particular, if $\gl \pi_*S$ is less than $N-1$, then $\Q \colon \L \to \Mod  \pi_*S$ is an equivalence of categories. 

\end{prop}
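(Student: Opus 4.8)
The plan is to reduce all three assertions to one algebraic input: that for objects of $\Ho(\M)$ whose homotopy is concentrated in a single residue class modulo $N$, the morphism groups of $\Ho(\M)$ are a single $\Ext$-group over $\pi_*S$, with no differentials and no extension problems. Since $S$ is a compact generator, every pair $A,B$ in $\Ho(\M)$ carries a universal coefficient spectral sequence $E_2^{s,t}=\Ext^s_{\pi_*S}(\pi_*A,\pi_*B)_t\Rightarrow[A,B]_{t-s}$, which converges and is concentrated in finitely many columns when $\pi_*A$ has finite projective dimension (automatic here, as $\gl\pi_*S<N$). If $\pi_*A,\pi_*B$ lie in residue classes $a,b$ modulo $N$, then by $N$-sparsity a projective resolution of $\pi_*A$ stays in class $a$, so $\Ext^s_{\pi_*S}(\pi_*A,\pi_*B)_t\ne 0$ forces $t\equiv b-a\pmod N$; with $\gl\pi_*S<N$ this means at most one $s\in\{0,\dots,N-1\}$ contributes to each $[A,B]_n$, hence $E_2=E_\infty$ and $[A,B]_n\cong\Ext^{s}_{\pi_*S}(\pi_*A,\pi_*B)_{n+s}$ for that unique $s$, naturally and compatibly with the Yoneda product. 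Two cases recur: for $a=b$, $[A,B]_0=\Hom_{\pi_*S}(\pi_*A,\pi_*B)_0$, so module maps between objects in the same class lift uniquely to $\Ho(\M)$; for $b=a+1$, $[A,B]_0=\Ext^1_{\pi_*S}(\pi_*A,\pi_*B)_1$, and for $f\colon A\to B$ (with $f_*=0$ on homotopy, the classes being distinct) a chase of the cofibre sequence of $f$ identifies the class of $f$ with the class of the extension $0\to\pi_*B\to\pi_*\Cone(f)\to\Sigma\pi_*A\to 0$. I also record the standard consequence that a graded module concentrated in one residue class, with a finite projective resolution, is realizable by a cofibrant object of $\M$, rigidly so because $\gl\pi_*S<N$ kills the higher obstructions.

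\textbf{Full faithfulness of $\Q\vert_\K$.} For $X,Y\in\K$, the differential of $\Q X$ respects the degreewise splitting into residue classes, and in each class the flag $\Img d\subseteq\Ker d\subseteq\Q X$ is identified, via the two cases above, with $\pi_*X_{\beta_i}\xto{\pi_*l_i}\pi_*X_{\zeta_i}$; in particular $\Ker d\cong\bigoplus_i\pi_*X_{\zeta_i}$ and $\Img d\cong\bigoplus_i\pi_*X_{\beta_i}$, so $\proj\Ker d,\proj\Img d<N-1$. A chain map $h\colon\Q X\to\Q Y$ restricts to module maps on each $\pi_*X_{\zeta_i}$ and $\pi_*X_{\beta_i}$, which lift uniquely to $g_{\zeta_i},g_{\beta_i}$ in $\Ho(\M)$. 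The $l_i$-squares commute because they live in $\Hom_{\pi_*S}$-groups where commutativity is detected on homotopy; the $k_i$-squares live in $\Ext^1$-groups and are not so detected, but commute because $h$ exhibits the pushout of the extension class of $\Cone(k^X_i)$ along $g_{\zeta_i}$ as the pullback of the extension class of $\Cone(k^Y_i)$ along $g_{\beta_{i-1}}$, and by the naturality of "class of a map $=$ extension class of its cone" this says precisely $g_{\zeta_i}k^X_i=k^Y_ig_{\beta_{i-1}}$. Thus $g_\bullet$ is a morphism of $\Ho(\M^{\C_N})$ with $\Q g_\bullet=h$, giving surjectivity of $\Q$ on $\Hom$-sets; injectivity (that $\Q g$ null-homotopic forces $g=0$) follows by running the same analysis on a chain null-homotopy.

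\textbf{Essential image.} One direction was just observed. Conversely, given $(C,d)$ with $\proj\Ker d,\proj\Img d<N-1$, split $C=\bigoplus_iC^{(i)}$ into residue classes and set $Z^{(i)}=\Ker(d\vert_{C^{(i)}})$, $B^{(i)}=\Img d\cap C^{(i)}$; these are summands of $\Ker d$, $\Img d$, hence of projective dimension $<N-1$, with $B^{(i)}\subseteq Z^{(i)}$ since $d^2=0$. Realize $Z^{(i)}$ by a cofibrant $X_{\zeta_i}$ and $B^{(i)}$ by a cofibrant $X_{\beta_i}$ (possible, and unique up to the expected isomorphism, as these modules lie in one residue class and $\gl\pi_*S<N$), realize $B^{(i)}\hookrightarrow Z^{(i)}$ by $l_i$, and realize the class in $\Ext^1_{\pi_*S}$ of the canonical extension $0\to Z^{(i)}\to C^{(i)}\to C^{(i)}/Z^{(i)}\to 0$ (note $C^{(i)}/Z^{(i)}\cong\Sigma\pi_*X_{\beta_{i-1}}$, matching the degree shift in the connecting map of a cone) by $k_i\colon X_{\beta_{i-1}}\to X_{\zeta_i}$. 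The resulting $X$ lies in $\K$ by construction, and the cofibre-sequence computation of $\Q$ gives $\Q X\cong(C,d)$. Finally, if $\gl\pi_*S<N-1$ then $\K=\L$ and the essential-image condition is automatic, so $\Q\colon\L\to\Mod\pi_*S$ is fully faithful and essentially surjective, hence an equivalence.

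\textbf{Main obstacle.} The delicate part is the $\Ext^1$-level bookkeeping: making the identification of "class of a map" with "extension class of its cone", together with all the connecting homomorphisms of the cofibre sequences and their naturality, precise enough — with correct signs and degree shifts — that the obstruction-theoretic tower computing $[X,Y]_{\Ho(\M^{\C_N})}$ over the small category $\C_N$ is matched on the nose with the $\Hom$-complex of differential graded $\pi_*S$-modules computing $[\Q X,\Q Y]$. It is exactly the cyclic shape of $\C_N$, i.e.\ the wrap-around arrow $k_0\colon\beta_{N-1}\to\zeta_0$, that forces $\Q X$ to be a genuine differential graded module rather than a finite filtered object, and hence makes this matching the substantive point of the argument.
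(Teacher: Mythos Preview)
The paper does not supply its own proof of this proposition; it is quoted from \cite[Section 3.3]{Pat12}. Your outline follows the expected route --- degeneration of the universal-coefficient spectral sequence by sparsity, identification of $\Ker d$ and $\Img d$ of $\Q X$ with $\bigoplus_i\pi_*X_{\zeta_i}$ and $\bigoplus_i\pi_*X_{\beta_i}$, and realization of modules and extension classes by objects and maps in $\Ho(\M)$ --- and the essential-image direction is essentially sound: since $\C_N$ is a one-dimensional poset with no nontrivial composites, choosing representatives in $\M$ for the homotopy classes $l_i,k_i$ already produces an honest object of $\M^{\C_N}$, with no further coherence required.

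The full-faithfulness argument, however, has a genuine gap at the sentence ``Thus $g_\bullet$ is a morphism of $\Ho(\M^{\C_N})$''. What you have produced is a compatible family $(g_{\beta_i},g_{\zeta_i})$ of morphisms in $\Ho(\M)$, that is, a morphism in $\Ho(\M)^{\C_N}$; this is strictly weaker than a morphism in $\Ho(\M^{\C_N})$, and bridging that difference is exactly the content of the proposition. You rightly flag this as the ``main obstacle'' in your final paragraph, but the body of your proof never addresses it. What is actually needed is a direct computation of $[X,Y]$ in $\Ho(\M^{\C_N})$ for $X,Y\in\K$ --- e.g.\ by building a cofibrant replacement of $X$ in the projective model structure on $\M^{\C_N}$, or via a Bousfield--Kan/obstruction spectral sequence for maps of $\C_N$-diagrams --- and then an identification of the answer with $\Hom_{\Mod\pi_*S}(\Q X,\Q Y)$. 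Note that the defining hypothesis of $\K$, namely $\proj\pi_*X_{\beta_i},\proj\pi_*X_{\zeta_i}<N-1$ rather than merely $<N$, is consumed precisely at this step (it is what kills the obstruction to rigidifying a $\Ho(\M)^{\C_N}$-map), and your full-faithfulness argument as written never invokes it; that is a symptom that the substantive step has been skipped.

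A smaller slip: in the injectivity clause you write ``$\Q g$ null-homotopic forces $g=0$''. Since $\Q$ takes values in $\Mod\pi_*S$ (chain complexes, not the derived category), faithfulness is the assertion that $\Q g=0$ as a chain map implies $g=0$ in $\Ho(\M^{\C_N})$; chain null-homotopies play no role here.
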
 

Using this proposition we can once and for all choose an inverse $\Q^{-1} \colon \Q(\K) \to \K$. Denote by $\R'$ the composite
$$\xymatrix{ \Q(\K) \ar[rr]^-{(\Q\vert_{ \K})^{-1}} & & \K \subseteq \Ho(\M^{\C_N}) \ar[rr]^-{\Hocolim} & & \Ho(\M).}$$
By \cite[Section 3.4]{Pat12} one has a natural isomorphism $\pi_* \circ \R' \cong H_*$. Further note that by \cite[Remark 2.2.5]{H97} every cofibrant differential graded $\pi_*S$-module is projective as an underlying graded module and hence $\Q(\K)$ contains all cofibrant objects. Altogether we conclude that $\R'$ gives rise to a functor
$$\R \colon \D(\pi_*S) \to \Ho(\M)$$
with a natural isomorphism $\pi_* \circ \R \cong H_*$.

It is shown in \cite[Section 3.5]{Pat12} that the functor $\R$ commutes with suspensions. More precisely consider the functor $(-)^\# \colon \L \to \L$ defined by
$$X^\#_{\beta_i}= S^1 \wedge X_{\beta_{i-1}}, \;\;\;\;\;\;  X^\#_{\zeta_i}= S^1 \wedge X_{\zeta_{i-1},}$$
$$k_i^\#= 1 \wedge k_{i-1}, \;\;\;\;\;\;\; l_i^\#= 1 \wedge l_{i-1}, \;\;\;\;\;\; i \in \Z/N\Z.$$
Then there is natural isomorphism of differential graded $\pi_*S$-modules $\Q(X^\#) \cong \Q(X)[1]$. This induces a natural isomorphism $\Q^{-1} (M[1]) \cong \Q^{-1}(M)^\#$ and after applying the $\Hocolim$ we get a natural isomorphism $\xi' : \R(X[1]) \cong \Sigma \R(X)$. Let $\xi$ denote $-\xi'$. We fix for the rest of the paper the suspension isomorphism $\xi : \R(X[1]) \cong \Sigma \R(X)$. The reason for changing the sign becomes apparent when one tries to show that $\R$ sends certain distinguished triangles to distinguished triangles (as opposed to anti-distinguished triangles).

\section{Classification of module spectra in global dimension three} \label{mainsec}

\subsection{The main theorem}

We begin with the following important lemma:

\begin{lem} \label{presprojtr} Let $\M$ be a simplicial stable model category and $S$ a compact generator. Suppose that the graded ring $\pi_*S=[S,S]_*$ is $N$-sparse for $N \geq 2$ and the graded global homological dimension of $\pi_*S$ is less than $N$. Further let
$$\xymatrix{P \ar[r]^-f & Q \ar[r]^-i & C \ar[r]^-{\pa} & P[1] }$$
be a distinguished triangle in $\D(\pi_*S)$ such that $H_*P$ and $H_*Q$ are projective graded $\pi_*S$-modules. Then the triangle
$$\xymatrix{\R(P) \ar[r]^-{\R(f)} & \R(Q) \ar[r]^-{\R(i)} & \R(C) \ar[r]^-{\R(\pa)} & \R(P[1]) \cong \Sigma \R(P)}$$
is distinguished. 

\end{lem}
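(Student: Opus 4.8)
The strategy is to realize the given triangle in $\D(\pi_*S)$ by an honest short exact sequence of differential graded modules whose terms are in $\Q(\K)$, and then transfer this along $\Q^{-1}$ to a diagram in $\M^{\C_N}$ to which $\Hocolim$ can be applied. First I would use that $H_*P$ and $H_*Q$ are projective to arrange a convenient model: replace $P$ and $Q$ by cofibrant dg-modules, so that (by \cite[Remark 2.2.5]{H97}) their underlying graded modules are projective; since $H_*P$ and $H_*Q$ are projective, one can further split off the boundaries and choose these models so that the differentials on $P$ and $Q$ are zero, i.e. $P$ and $Q$ are just (shifted sums of) projective graded modules with trivial differential. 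Then $f\colon P\to Q$ is, up to homotopy, a chain map between such, and I would replace the triangle by the canonical triangle $P\xto{f}Q\xto{i}C(f)\xto{\pa}P[1]$ associated to the mapping cone of this chain map, which is the one that $\D(\pi_*S)$'s triangulation is built from. The point of this normalization is that now $P$, $Q$ and $C(f)$ all have projective underlying graded modules with $\Ker d$ and $\Img d$ projective, hence all three sit in $\Q(\K)$ by Proposition \ref{Qequi}.

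Next I would lift the short exact sequence $0\to Q\xto{i}C(f)\xto{\pr}P[1]\to 0$ through the equivalence $\Q|_\K$. Because $\Q$ is an equivalence onto $\Q(\K)$ (or at least fully faithful with the relevant objects in its image), there are objects $X_Q, X_C, X_{P[1]}\in\K$ and morphisms in $\K\subseteq\Ho(\M^{\C_N})$ realizing $i$ and $\pr$; the key extra input I need is that this lift is not just a sequence of maps but can be taken to be a homotopy-cofiber sequence levelwise in $\M^{\C_N}$, i.e. that at each vertex $\beta_j,\zeta_j$ of $\C_N$ the triangle $X_{Q}\to X_{C}\to X_{P[1]}$ is distinguished in $\Ho(\M)$. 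This is precisely where the hypothesis on homological dimension and the structure of the category $\L$ enter: the conditions (i)--(iii) defining $\L$ (cofibrancy, the $\B[i]$-splitting, injectivity of $\pi_*l_i$) are arranged so that a short exact sequence of the $\Q$-modules corresponds to a levelwise cofiber sequence of diagrams, and one checks the injectivity/exactness conditions survive because the relevant Ext-groups vanish in the given range of global dimension. I would then invoke the fact, already used in \cite{Pat12} and recalled in the excerpt, that $\Hocolim\colon \Ho(\M^{\C_N})\to\Ho(\M)$ is exact on the relevant class of diagrams, so that applying it to the levelwise cofiber sequence yields a distinguished triangle $\R(Q)\to\R(C)\to\R(P[1])$ in $\Ho(\M)$, and finally rotate and fix signs (this is exactly the reason $\xi=-\xi'$ was chosen) to land on the asserted triangle $\R(P)\xto{\R(f)}\R(Q)\xto{\R(i)}\R(C)\xto{\R(\pa)}\Sigma\R(P)$.

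The main obstacle, as I see it, is the middle step: showing that a short exact sequence of dg-$\pi_*S$-modules lying in $\Q(\K)$ lifts under $\Q^{-1}$ to a sequence of $\C_N$-diagrams that is a cofiber sequence \emph{at every vertex} of $\C_N$, rather than merely a sequence that becomes exact after applying $\Q$. In general $\Q$ remembers only a colimit-type invariant of the diagram, so surjectivity/injectivity of the diagram-level maps has to be extracted from the algebraic data together with the projectivity hypotheses on $H_*P,H_*Q$; these hypotheses are exactly what forces the connecting maps in the relevant long exact sequences of homotopy groups to vanish, so that the would-be cofiber of $X_Q\to X_C$ has the correct homotopy and lies back in $\L$ (indeed in $\K$). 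I expect this to require the octahedral axiom in $\Ho(\M^{\C_N})$ together with a careful bookkeeping of the $\B[i]$-grading, much as in \cite[Section 3]{Pat12}; once it is in place, the exactness of $\Hocolim$ and the sign normalization are routine. The remaining compatibility --- that the third map really is $\R(\pa)$ and not merely \emph{some} map filling the triangle --- follows from naturality of the construction $X\mapsto X^\#$ and the way $\xi'$ was defined from the isomorphism $\Q(X^\#)\cong\Q(X)[1]$.
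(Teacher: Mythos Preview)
Your normalization to $P,Q$ with trivial differential is correct and matches the paper's first move (the paper additionally reduces to both lying in a single $\B[n]$, which further simplifies the picture). From that point on, however, the paper takes a route that avoids precisely the obstacle you single out.

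You propose to lift the short exact sequence $0\to Q\to C(f)\to P[1]\to 0$ through $\Q^{-1}$ and then argue that the lifted sequence is a levelwise cofiber sequence, invoking exactness of $\Hocolim$. There are two genuine difficulties here. First, $\Q^{-1}$ lands in $\K\subset\Ho(\M^{\C_N})$, and the object $\Q^{-1}(P[1])\cong(\Q^{-1}P)^{\#}$ is \emph{not} $\Sigma\,\Q^{-1}(P)$ in $\Ho(\M^{\C_N})$; so the lifted sequence is not even a candidate for a distinguished triangle in that category, and exactness of $\Hocolim$ as a triangulated functor does not directly apply. Second, even vertexwise, there is no abstract reason why the cofiber of the lifted map $X_Q\to X_C$ should lie in $\L$ or agree with $X_{P[1]}$; the heuristic you offer (``connecting maps vanish by projectivity'') controls $\pi_*$ at each vertex but does not by itself produce a triangle in the diagram category. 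You correctly flag this as the main obstacle, but the hoped-for octahedral argument does not resolve it.

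The paper sidesteps all of this by an explicit construction rather than an abstract lift. It factors $f$ as $P\twoheadrightarrow\Img f\hookrightarrow Q$, realizes this in $\M$ as cofibrations $A\xto{\alpha} V\xto{\gamma} B$ (possible since $P,Q$ are projective, whence $\Img f$ and $\Ker f$ have projective dimension $<N-1$), and \emph{writes down by hand} diagrams $X,Y,Z\in\K$: here $X,Y$ are concentrated at $\zeta_n$ with values $A,B$, while $Z$ has $Z_{\beta_n}=V$, $Z_{\zeta_n}=B$, $Z_{\zeta_{n+1}}=\Cone(\alpha)$ and all other entries trivial. One then checks directly from the definition of $\Q$ that the sequence $X\xto{\overline\varphi}Y\to Z\to X^{\#}$ is sent by $\Q$ to the algebraic mapping cone sequence of $f$; and separately, from an explicit pushout square, that $\Hocolim$ sends the \emph{same} sequence to the mapping cone sequence of $\varphi=\gamma\alpha$ in $\Ho(\M)$. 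No appeal to the octahedral axiom or to abstract exactness of $\Q^{-1}$ is needed. The missing ingredient in your plan is this concrete diagram $Z$ built from the image factorization of $f$; once you have it, both verifications are short computations.
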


\begin{proof} Using \cite[Lemma 2.2.2]{Pat12} without loss of generality we may assume that $P$ and $Q$ have no differentials and $f \colon P \to Q$ is a morphism of graded $\pi_*S$-modules. Moreover, one can also assume that both $P$ and $Q$ are contained in $\B[n]$ for some $n$. We will now check that the functor $\R$ sends the algebraic mapping cone sequence 
$$\xymatrix{P \ar[r]^-f & Q \ar[r]^-i & C(f) \ar[r]^-{\pa} & P[1]}$$
to a distinguished triangle. 

By \cite[Proposition 3.2.1]{Pat12}, there exist fibrant-cofibrant objects $A$, $B$ and $V$ in $\M$ such that the following isomorphisms hold in $\Grmod \pi_*S$: 
$$\pi_* A \cong P, \;\;\;\;\;\; \pi_* B \cong Q, \;\;\;\;\;\; \pi_* V \cong \Imm f.$$
Moreover, we can choose cofibrations $\alpha \colon A \to V$ and $\gamma \colon V \to B$ (by changing $V$ and $B$ up to weak equivalence if necessary) so that under the latter isomorphisms $\pi_*\alpha$ corresponds to the map $f \colon P \twoheadrightarrow \Imm f$ and $\pi_*\gamma$ corresponds to the inclusion $\Imm f \hookrightarrow Q$. Consider the composite cofibration $\varphi= \gamma \circ \alpha \colon A \to B$. Then $\pi_*\varphi$ corresponds to $f \colon P \to Q$ under the isomorphisms $\pi_* A \cong P$ and $\pi_* B \cong Q$. Next, using the mapping cone sequence
$$\xymatrix{A \ar[r]^-{\alpha} & V \ar[r]^-{j} & \Cone (\alpha) \ar[r]^-{\delta} & S^1 \wedge A}$$
we see that $\pi_* \Cone (\alpha)$ is isomorphic to $\Ker f [1]$ in $\Grmod \pi_*S$. Moreover, we can choose an isomorphism $\pi_* \Cone (\alpha) \cong \Ker f [1]$ so that the map 
$$\pi_*\delta \colon \pi_*(\Cone (\alpha)) \to \pi_*(S^1 \wedge A) \cong \pi_*(A)[1]$$
corresponds to the inclusion $\Ker f[1] \hookrightarrow P[1]$. Further since $P$ and $Q$ are projective and $\gl \pi_*S < N$, it follows that the projective dimensions of $\Imm f$ and $\Ker f$ in $\Grmod \pi_*S$ are less than $N-1$. Hence the diagram
$$\xymatrix{Z \colon & \ast
   \ar@{<-}[drrrrrrr]%
    |<<<<<<<<<<<{\text{\Large \textcolor{white}{$\blacksquare$}}}%
    |<<<<<<<<<<<<<<<<<<<<<{\text{ \Large \textcolor{white}{$\blacksquare$}}}
    |<<<<<<<<<<<<<<<<<<<<<<<<<<<<<<<<<<<<<{\text{\Large \textcolor{white}{$\blacksquare$}}}
    |<<<<<<<<<<<<<<<<<<<<<<<<<<<<<<<<<<<<<<<<<<<<{\text{\Large \textcolor{white}{$\blacksquare$}}}
    |>>>>>>>>>>>>>>>>>>>>>>>>>>>>>>>>>>>>>>>>>>>>>>>{\text{\Large \textcolor{white}{$\blacksquare$}}}
    |>>>>>>>>>>>>>>>>>>>>>>>>>>>>>>>>>>>>>>>>{\text{\Large \textcolor{white}{$\blacksquare$}}}
    |>>>>>>>>>>>>>>>>>>>>>>>>>>>>>>{\text{\Large \textcolor{white}{$\blacksquare$}}}
    |>>>>>>>>>>>>>>>>>>>>>>>>>{\text{\Large \textcolor{white}{$\blacksquare$}}}
& \ast & \ldots &  \ast & B & \Cone(\alpha) & \ldots & \ast \\ & \ast \ar[u] \ar[ur] & \ast \ar[u] \ar[ur] & \ldots & \ast \ar[u] \ar[ur] & V \ar[u]^(.4){\gamma}  \ar[ur]^{j} & \ast \ar[u] \ar[ur] & \ldots& \ \ast, \ar[u]}$$
where $Z_{\zeta_n}=B$, $Z_{\beta_n}=V$ and $Z_{\zeta_{n+1}}=\Cone(\alpha)$ is contained in $\K$. 

Now let $X \in \K$ be the diagram with the only nontrivial entry $X_{\zeta_n}=A$ and let $Y \in \K$ be the diagram with the only nontrivial entry $Y_{\zeta_n}=B$. The map $\varphi= \gamma \circ \alpha \colon A \to B$ induces a map $\overline{\varphi}  \colon X \to Y$. Further, we have obvious maps $Y \to Z$ and $Z \to X^{\#}$ in $\K$, where the first map is the identity on $Y_{\zeta_n}=B$ and the second map is induced by $\delta \colon \Cone(\alpha) \to S^1 \wedge A$. By definition of the algebraic mapping cone, the sequence
$$\xymatrix{X \ar[r]^{\overline{\varphi}} & Y \ar[r] & Z \ar[r] & X^\#}$$ 
is mapped up to isomorphism by $\Q$ to the sequence 
$$\xymatrix{P \ar[r]^-{f} & Q \ar[r]^-{i} & C (-f) \ar[r]^-{\partial} & P[1].}$$
On the other hand the pushout diagrams
$$\xymatrix{A \ar[r]^{\alpha} \ar[d] & V \ar[d]^{j} \ar[r]^{\gamma} & B \ar[d] \\ CA \ar[r] & \Cone(\alpha) \ar[r] & \Cone(\varphi)}$$
show that the sequence 
$$\xymatrix{X \ar[r]^{\overline{\varphi}} & Y \ar[r] & Z \ar[r] & X^\#}$$
is mapped to the mapping cone sequence of $\varphi$ by the homotopy colimit functor. This finishes the proof. \end{proof}

The following is one of the main results of the paper. It improves the result obtained in Section 6.3 of \cite{Pat12}.

\begin{prop} \label{Rfulfaithful} Let $\M$ be a simplicial stable model category and $S$ a compact generator of $\Ho(\M)$. Suppose that the graded ring $\pi_*S=[S,S]_*$ is $N$-sparse for $N \geq 2$ and the graded global homological dimension of $\pi_*S$ is less than $N-1$. Further let $M$ be a differential graded $\pi_*S$-module with $\proj H_*M \leq 3$ in the category of graded $\pi_*S$-modules. Then for any differential graded $\pi_*S$-module $L$, the map
$$\R \colon [M,L] \to [\R(M), \R(L)]$$
is an isomorphism. \end{prop}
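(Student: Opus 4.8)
The plan is to reduce the claim to the case where $H_*M$ is projective and $M$ itself is a projective-resolution-type object with small support, and then to run an induction on $\proj H_*M$ using Lemma \ref{presprojtr} together with the octahedral axiom, exactly as advertised in the introduction. First I would note that by \cite[Lemma 2.2.2]{Pat12} we may replace $M$ by a formal object, so without loss of generality $M$ has zero differential and is a single projective graded module, or more precisely we may take a projective resolution of $H_*M$ of length $\le 3$ and present $M$ as a totalisation of a complex $0 \to P_3 \to P_2 \to P_1 \to P_0 \to 0$ with each $P_j$ projective and concentrated in $\B[n]$ for a fixed $n$ (the sparseness and the splitting $\Grmod \pi_*S \sim \B \oplus \cdots \oplus \B[N-1]$ let us do this one degree-class at a time). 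Writing $\sigma_{\le k} M$ for the stupid truncations, one gets distinguished triangles in $\D(\pi_*S)$ relating $M$ to modules with projective homology of strictly smaller dimension, and by \cite{Pat12} (the $\le 2$-dimensional case, which is where $\gl \pi_*S < N-1$ is used) the statement is already known for such modules.

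The inductive step is where Lemma \ref{presprojtr} enters. Given the triangle $P \to Q \to C \xrightarrow{\pa} P[1]$ with $H_*P$, $H_*Q$ projective that resolves $M$ (so $C \simeq M$ up to a shift, or $M$ sits in a triangle of this shape), Lemma \ref{presprojtr} tells us that $\R$ carries it to a distinguished triangle in $\Ho(\M)$. Now apply $[-,\R(L)]$ and $[-,L]$ to the two triangles and compare the resulting long exact sequences via the natural transformation induced by $\R$. On the $P$ and $Q$ spots the comparison maps are isomorphisms because $H_*P$ and $H_*Q$ are projective — here one invokes the fact that $\R$ is compatible with homotopy/homology ($\pi_* \circ \R \cong H_*$) and that for a module with projective homology the Hom-groups out of it into an arbitrary $L$ are computed by a collapsing Adams/universal-coefficient spectral sequence, identifying $[M',L]$ with $\Hom_{\pi_*S}(H_*M', H_*L)$ in the appropriate graded sense, and the same on the topological side by \cite[Proposition 3.2.1]{Pat12} and the structure of $\Mod R$; this matches what is proved for $\le 1$-dimensional objects in \cite{Pat12}. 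A five-lemma argument then gives that the comparison map on the $C$ (hence $M$) spot is an isomorphism. One has to be a little careful that the triangle in $\D(\pi_*S)$ one feeds into Lemma \ref{presprojtr} really has both outer terms with projective homology; this forces the "resolve from the left" bookkeeping and is precisely why the octahedral axiom is needed — to splice two such triangles and climb from dimension $\le 2$ to dimension $3$.

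The main obstacle I expect is the naturality/compatibility of the two long exact sequences: one must check that the map $\R$ on Hom-groups is compatible with the connecting homomorphisms $\pa^*$ of the two triangles, i.e. that the square built from $[\pa, -]$ on the algebraic side and $[\R(\pa), -]$ on the topological side commutes. This is not automatic from $\R$ being a functor on the homotopy-category level, because the suspension isomorphism $\xi$ (whose sign was deliberately fixed earlier) intervenes; getting the sign right here is exactly the reason the paper fixed $\xi = -\xi'$, and one should cite the verification in \cite{Pat12, Pat16} that $\R$ sends distinguished triangles to distinguished (not anti-distinguished) triangles. Once the comparison of long exact sequences is in place, the five-lemma finishes the argument, and the induction on $\proj H_*M \le 3$ (base case $\le 2$ from \cite{Pat12}) completes the proof.
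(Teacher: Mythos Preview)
Your outline has the right ingredients --- Lemma \ref{presprojtr}, the octahedral axiom, and a Five-Lemma comparison --- but the way they fit together is not as you describe, and one essential input is missing.

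A minor point first: \cite[Lemma 2.2.2]{Pat12} does not let you assume $M$ is formal. It only produces a map $P \to M$ from an object with projective homology which is surjective on $H_*$. The argument has to work for arbitrary $M$.

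The real gap is your claim that the cone $C$ in the triangle $P \to Q \to C \to P[1]$ to which Lemma \ref{presprojtr} applies is ``$M$ up to a shift''. It is not. Resolve $M$ once to get a triangle $M' \to P \to M \to M'[1]$ with $\proj H_*M' \le 2$, then resolve $M'$ to get $M'' \to Q \to M' \to M''[1]$ with $\proj H_*M'' \le 1$. The octahedral axiom, applied to this configuration, yields a new object $C$ together with \emph{two} new distinguished triangles
\[
P \xrightarrow{\;\alpha\;} C \xrightarrow{\;\beta\;} Q[1] \xrightarrow{(i\lambda)[1]} P[1]
\qquad\text{and}\qquad
M''[1] \xrightarrow{\;u\;} C \xrightarrow{\;v\;} M \longrightarrow M''[2].
\]
Lemma \ref{presprojtr} applies to the first, and together with \cite[Section 5.2]{Pat12} on the projective terms and the Five Lemma you get $\R \colon [C,L] \xrightarrow{\cong} [\R C, \R L]$. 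But this is an isomorphism for $C$, not for $M$. To pass from $C$ to $M$ you need $\R$ to carry the \emph{second} triangle to a distinguished triangle, and Lemma \ref{presprojtr} does not apply there (neither $M''[1]$ nor $M$ has projective homology). This step is exactly \cite[Lemma 6.2.4]{Pat12}, whose hypotheses ($\proj H_*M'' \le 1$ and $H_*u$ monic) are satisfied by construction. A second application of the Five Lemma, now using $\R \colon [M'',L]_* \cong [\R M'', \R L]_*$ from \cite[Section 5.2]{Pat12}, then gives the isomorphism for $M$.

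So the argument needs two Five-Lemma passes, not one, and the pass you omitted rests on a lemma you never invoke. By contrast, your closing worry about naturality of the connecting maps is not a genuine obstacle: once $\R$ is known to send a given triangle to a distinguished triangle, compatibility of the long exact sequences follows from functoriality and the fixed isomorphism $\xi$.
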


\begin{proof} By \cite[Lemma 2.2.2]{Pat12} there exists a morphism $\sigma \colon P \to M$ in $\D(\pi_*S)$, such that $H_*P$ is a projective graded $\pi_*S$-module and $H_*\sigma$ is an epimorphism. Embed this morphism into a distinguished triangle
$$\xymatrix{M' \ar[r]^{i} & P \ar[r]^{\sigma} & M \ar[r]^-{\pa} & M'[1].}$$
It follows from the long exact homology sequence that the map $H_*\pa \colon H_*(M) \to H_*(M'[1])$ is zero. Hence, we get a short exact sequence
$$\xymatrix{0 \ar[r] & H_*M' \ar[r]^-{H_*i} & H_*P \ar[r]^{H_*\sigma} & H_*M \ar[r] & 0.}$$
This implies that $\proj H_*M' \leq 2$. Now one can resolve $M'$. More precisely, once again using \cite[Lemma 2.2.2]{Pat12} we know that there exists a map $\lambda \colon Q \to M'$ in $\D(\pi_*S)$, such that $H_*Q$ is a projective graded $\pi_*S$-module and $H_*\lambda$ is an epimorphism. Next choose a distinguished triangle in $\D(\pi_*S)$
$$\xymatrix{M'' \ar[r]^j & Q \ar[r]^\lambda & M' \ar[r]^-{\delta} & M''[1].}$$
Since $H_*\lambda$ is an epimorphism, the long exact homology sequence implies that $H_*\delta=0$ and the sequence
$$\xymatrix{0 \ar[r] & H_*M'' \ar[r]^-{H_*j} & H_*Q \ar[r]^{H_*\lambda} & H_*M' \ar[r] & 0}$$
is exact. Hence we conclude that $\proj H_*M'' \leq 1$. 

Now using the octahedral axiom we combine the latter two distinguished triangles and get the following commutative diagram in $\D(\pi_*S)$
$$\xymatrix{ M[-1] \ar[rr]^{-\pa[-1]} \ar@{=}[d] & & M' \ar[r]^{i} \ar[d]_{\delta} & P \ar[r]^{\sigma} \ar@{-->}[d]^{\alpha} & M \ar@{=}[d]\\ M[-1] \ar[rr]^{-\delta \circ (\pa[-1])}  & & M''[1] \ar[d]_{j[1]} \ar[r]^{u}  & C \ar[r]^{v} \ar@{-->}[d]^{\beta} & M \ar[d]^{-\pa} \\ & & Q[1] \ar@{=}[r] \ar[d]_{\lambda[1]} & Q[1] \ar[d]^{(i \circ \lambda)[1]} \ar[r]^{\lambda[1]} & M'[1] \\ & & M'[1] \ar[r]^{i[1]} & P[1], & }$$ 
where the triangles in the rows and columns are distinguished. By Lemma \ref{presprojtr} the triangle
$$\xymatrix{\R(P) \ar[r]^{\R(\alpha)} & \R(C) \ar[r]^{\R(\beta)} & \R(Q[1]) \ar[rr]^-{\R((i \circ \lambda)[1])} & & \R(P[1]) \cong \Sigma \R(P)}$$
is distinguished. Consider the commutative diagram with exact rows
{\footnotesize
$$\xymatrix{[P[1],L] \ar[r] \ar[d]^\R & [Q[1], L] \ar[r] \ar[d]^\R & [C, L] \ar[r] \ar[d]^\R & [P, L] \ar[r] \ar[d]^\R & [Q, L] \ar[d]^\R \\  [\R(P[1]), \R(L)] \ar[r] & [\R(Q[1]), \R(L)] \ar[r] & [\R(C), \R(L)]  \ar[r] & [\R(P), \R(L)] \ar[r] & [\R(Q), \R(L)].}$$}
All the morphisms except the middle one in this diagram are isomorphisms by \cite[Section 5.2]{Pat12}. Now the Five lemma implies that so is the middle map 
$$\R \colon [C, L] \to [\R(C), \R(L)].$$
Next, since $H_*u$ is a monomorphism and $\proj H_*M'' \leq 1$, Lemma 6.2.4 of \cite{Pat12} applies and we see that the triangle
$$\xymatrix{\R(M''[1]) \ar[r]^{\R(u)} & \R(C) \ar[r]^{\R(v)} & \R(M) \ar[rr]^-{\R((\delta [1]) \circ \pa)} & & \R(M''[2]) \cong \Sigma \R(M''[1])}$$
is distinguished. Again by \cite[Section 5.2]{Pat12}, we know that the map $\R \colon [M'', L]_* \to [\R(M''), \R(L)]_*$ is an isomorphism since $\proj H_*M'' \leq 1$. Finally, another Five lemma argument together with the previous paragraph shows that the map
$$\R \colon [M, L] \to [\R(M), \R(L)]$$
is an isomorphism. 

\end{proof}

Next we prove that the functor $\R$ is essentially surjective on objects with homological dimension at most three. 

\begin{prop} \label{Ressurj} Let $\M$ be a simplicial stable model category and $S$ a compact generator of $\Ho(\M)$. Suppose that the graded ring $\pi_*S=[S,S]_*$ is $N$-sparse for $N \geq 2$ and the graded global homological dimension of $\pi_*S$ is less than $N-1$. Let $X$ be an object of $\Ho(\M)$ such that $\proj \pi_*X \leq 3$. Then there exists an object $M \in \D(\pi_*S)$ such that $\R(M)$ is isomorphic to $X$.

\end{prop}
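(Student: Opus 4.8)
The plan is to build $M$ by resolving $\pi_*X$ step by step inside $\Ho(\M)$, in a manner dual to the resolution argument used in the proof of Proposition \ref{Rfulfaithful}, and then recognize the pieces as images of $\R$ using the already-established facts. First I would use \cite[Lemma 2.2.2]{Pat12} (or rather its topological analogue, \cite[Proposition 3.2.1]{Pat12}) to choose a cofibrant-fibrant object $A$ in $\M$ with $\pi_*A \cong P_0$ a projective graded $\pi_*S$-module surjecting onto $\pi_*X$, and realize the surjection by a map $A \to X$ in $\Ho(\M)$; complete it to a distinguished triangle $X'\to A\to X\to \Sigma X'$. The long exact homotopy sequence then gives a short exact sequence $0\to \pi_*X'\to \pi_*A\to\pi_*X\to 0$, hence $\proj \pi_*X' \leq 2$. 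Repeating this twice more produces objects $X''$ with $\proj \pi_* X''\leq 1$ and $X'''$ with $\pi_*X'''$ projective (i.e. $\proj \pi_*X'''\leq 0$), together with short exact sequences relating all the homotopy groups, and distinguished triangles
$$\xymatrix{X''' \ar[r] & A'' \ar[r] & X'' \ar[r] & \Sigma X''', & X'' \ar[r] & A' \ar[r] & X' \ar[r] & \Sigma X'',}$$
where $A', A''$ are realizations of projective graded modules.

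Next I would lift this tower to the algebraic side. Since $\pi_*A$, $\pi_*A'$, $\pi_*A''$, $\pi_*X'''$ are all projective graded $\pi_*S$-modules, they are in the essential image of $\R$ by \cite[Section 5.2]{Pat12} (the at-most-one-dimensional, indeed zero-dimensional, case), and the maps between them in $\Ho(\M)$ are realized by maps in $\D(\pi_*S)$ because $\R$ is fully faithful on such objects by the same reference. Concretely, choose differential graded $\pi_*S$-modules $\widehat A, \widehat{A'}, \widehat{A''}, \widehat{X'''}$ with $H_*$ equal to the corresponding homotopy groups and $\R$-images isomorphic to $A, A', A'', X'''$, and lift the map $\widehat{X'''}\to\widehat{A''}$ realizing $X'''\to A''$. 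Form the algebraic mapping cone $M'' = C(\widehat{X'''}\to\widehat{A''})$ in $\D(\pi_*S)$; by Lemma \ref{presprojtr} (applied with $P = X'''$, $Q = A''$, both with projective homology), $\R$ sends the algebraic triangle $\widehat{X'''}\to\widehat{A''}\to M''\to \widehat{X'''}[1]$ to a distinguished triangle, which by the uniqueness-up-to-isomorphism of cofibres of $\R(\widehat{X'''})\cong X'''\to A''\cong \R(\widehat{A''})$ gives $\R(M'')\cong X''$. Note $\proj H_*M''\leq 1$. Then lift the map $X''\to A'$ to a map $M''\to \widehat{A'}$ in $\D(\pi_*S)$ — this uses that $\R$ is full on $[M'',\widehat{A'}]$, which holds by \cite[Section 5.2]{Pat12} since $\proj H_*M''\leq 1$ — form $M' = C(M''\to\widehat{A'})$, and invoke Lemma \ref{presprojtr} once more (now $P = M''$? no — the lemma requires $H_*P$ and $H_*Q$ \emph{projective}, and $H_*M''$ need not be; here I must instead use Lemma 6.2.4 of \cite{Pat12}, which only needs $H_*$ of the fibre term one-dimensional and the relevant homology map injective) to conclude $\R(M')\cong X'$, with $\proj H_*M'\leq 2$. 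Finally lift $X'\to A$ to $M'\to\widehat A$ in $\D(\pi_*S)$ — again $\R$ is full here by Proposition \ref{Rfulfaithful} (or \cite[Section 5.2]{Pat12}) since $\proj H_*M'\leq 2$ — set $M = C(M'\to\widehat A)$, and apply the analogue of Lemma \ref{presprojtr} for homological dimension $2$ (this is exactly the content used in the proof of Proposition \ref{Rfulfaithful}, via Lemma 6.2.4 of \cite{Pat12} and the octahedron) to get that $\R$ carries the defining triangle of $M$ to a distinguished triangle with third term $\cong X$; hence $\R(M)\cong X$. Since each cofibre construction only increases $\proj H_*$ by at most one and we started from projectives, $\proj H_*M\leq 3$, consistent with the hypothesis.

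The main obstacle I expect is the bookkeeping needed to ensure that at each stage the hypotheses of the appropriate "$\R$ preserves this triangle" lemma are actually met — in particular that the connecting homology maps vanish (so the short exact sequences split off cleanly and the projective dimension drops) and that the homology of the intermediate cofibres $M'', M'$ really has projective dimension $\leq 1$ and $\leq 2$ respectively, so that the correct one of Lemma \ref{presprojtr}, Lemma 6.2.4 of \cite{Pat12}, and the two-dimensional preservation statement applies. A secondary point requiring care is the lifting of maps from the $\Ho(\M)$ tower to the $\D(\pi_*S)$ tower: each lift uses fullness of $\R$ on the relevant hom-group, and one must check the source object's homology has small enough projective dimension for the corresponding fullness result ($\proj\leq 0,1,2$ handled by \cite[Section 5.2]{Pat12} and Proposition \ref{Rfulfaithful}). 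Once these compatibilities are arranged, the conclusion is immediate from the triangle-preservation statements and the uniqueness of cofibres in a triangulated category.
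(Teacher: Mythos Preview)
Your route differs from the paper's. You resolve $X$ three times in $\Ho(\M)$ and then lift one cofibre at a time; the paper resolves only twice (to $Y$ with $\proj\pi_*Y\leq 2$ and then $Z$ with $\proj\pi_*Z\leq 1$) and immediately applies the octahedral axiom \emph{inside $\Ho(\M)$} to produce an auxiliary object $C$ sitting simultaneously in a triangle $G\to F\to C\to\Sigma G$ with $G,F$ realizing projectives and a triangle $\Sigma Z\to C\to X\to\Sigma^2 Z$. The first is lifted via Lemma~\ref{presprojtr} to obtain $D$ with $\R(D)\cong C$; the second via \cite[Lemma 6.2.4]{Pat12} (since $\proj\pi_*Z\leq 1$ and the relevant homology map is mono) to obtain $M$ with $\R(M)\cong X$. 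Two lifts and one octahedron. Note also that your descent to $X'''$ is redundant: once $\proj\pi_*X''\leq 1$, \cite[Section 5.2]{Pat12} already places $X''$ in the essential image of $\R$ directly.

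The substantive gap is your third lifting step. You need $\R$ to preserve the triangle $M'\to\widehat A\to M\to M'[1]$ with $\proj H_*M'\leq 2$, but neither available lemma covers this: Lemma~\ref{presprojtr} requires both homologies projective, and \cite[Lemma 6.2.4]{Pat12} requires $\proj\leq 1$ on the source. Your appeal to an ``analogue for homological dimension $2$'' is not something Proposition~\ref{Rfulfaithful} actually establishes---that proof only shows that the \emph{particular} triangles arising from its own octahedral diagram are preserved, not a general dimension-$2$ preservation statement. To close the gap you would have to run the octahedron yourself, say on $\widehat{A'}\to M'\to\widehat A$: the composite has projective source and target, so Lemma~\ref{presprojtr} handles its cofibre $E$, and the resulting triangle $M''[1]\to E\to M$ has $\proj H_*M''\leq 1$, so \cite[Lemma 6.2.4]{Pat12} applies (after verifying the monomorphism hypothesis). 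But once you carry this out you have reproduced the paper's key manoeuvre, only embedded inside a longer argument.
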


\begin{proof} The argument is similar to the one in the previous proposition. By \cite[Lemma 2.2.2]{Pat12} we can choose a map $\sigma \colon F \to X$ such that $\pi_*F$ is a projective graded $\pi_*S$-module and $\pi_*\sigma$ is surjective. Embed $\sigma$ into a distinguished triangle in $\Ho(\M)$
$$\xymatrix{Y \ar[r]^i & F \ar[r]^{\sigma} & X \ar[r]^{\pa} & \Sigma Y.}$$
The resulting short exact sequence
$$\xymatrix{0 \ar[r] & \pi_*Y \ar[r]^{\pi_*i} & \pi_*F \ar[r]^{\pi_*\sigma} & \pi_* X \ar[r] & 0}$$
shows that $\proj \pi_* Y \leq 2$. Next, again \cite[Lemma 2.2.2]{Pat12} tells us that there exists a map $\lambda \colon G \to Y$ such that $\pi_*G$ is projective and $\pi_*\lambda$ an epimorphism. Choose a distinguished triangle in $\Ho(\M)$
$$\xymatrix{Z \ar[r]^{j} & G \ar[r]^{\lambda} & Y \ar[r]^-{\delta} & \Sigma Z.}$$
This triangle induces a short exact sequence
$$\xymatrix{0 \ar[r] & \pi_*Z \ar[r]^{\pi_*j} & \pi_*G \ar[r]^{\pi_*\lambda} & \pi_*Y \ar[r] & 0}$$
which shows that $\proj \pi_* Z \leq 1$. Combining the latter two distinguished triangles using the octahedral axiom we get a commutative diagram in $\Ho(\M)$
$$\xymatrix{ \Sigma^{-1} X \ar[rr]^{-\Sigma^{-1}\pa} \ar@{=}[d] & & Y \ar[r]^{i} \ar[d]_{\delta} & F \ar[r]^{\sigma} \ar@{-->}[d]^{\alpha} & X \ar@{=}[d]\\ \Sigma^{-1} X \ar[rr]^{-\delta \circ (\Sigma^{-1}\pa)}  & & \Sigma Z \ar[d]_{\Sigma j} \ar[r]^{u}  & C \ar[r]^{v} \ar@{-->}[d]^{\beta} & X \ar[d]^{-\pa} \\ & & \Sigma G \ar@{=}[r] \ar[d]_{\Sigma \lambda} & \Sigma G \ar[d]^{\Sigma (i \circ \lambda)} \ar[r]^{\Sigma \lambda} & \Sigma Y \\ & & \Sigma Y \ar[r]^{\Sigma i} & \Sigma F, & }$$
where the triangles in the rows and columns are distinguished. By \cite[Section 5.2]{Pat12}, there exists a morphism $\omega \colon P \to Q$ and a commutative diagram
$$\xymatrix{\R(P) \ar[r]^{\R(\omega)} \ar[d]_{\cong} & \R(Q) \ar[d]^{\cong} \\ G \ar[r]^{i \circ \lambda} & F}$$
with vertical arrows isomorphisms. Embed $\omega$ into a distinguished triangle in $\D(\pi_*S)$
$$\xymatrix{P \ar[r]^\omega & Q \ar[r]^{a} & D \ar[r]^{b} & P[1].}$$
Since $\pi_* \circ \R \cong H_*$, it follows that $H_*P$ and $H_*Q$ are projective graded $\pi_*S$-modules. Hence by Lemma \ref{presprojtr}, the triangle 
$$\xymatrix{\R(P) \ar[r]^{\R(\omega)} & \R(Q) \ar[r]^{\R(a)} & \R(D) \ar[r]^-{\R(b)} & \R(P[1]) \cong \Sigma \R(P)}$$
is distinguished. By the axioms of triangulated categories we see that this triangle is isomorphic to the distinguished triangle
$$\xymatrix{G \ar[r]^{i \circ \lambda} & F \ar[r]^{\alpha} & C \ar[r]^-{-\beta} & \Sigma G.}$$ 
Hence we get an isomorphism $\R(D) \cong C$. Further, since $\proj \pi_*Z \leq 1$, it follows from \cite[Section 5.2]{Pat12} that there exists a morphism $\theta \colon K \to D$ and a commutative diagram
$$\xymatrix{\R(K) \ar[r]^{\R(\theta)} \ar[d]_{\cong} & \R(D) \ar[d]^{\cong} \\ \Sigma Z \ar[r]^{\Sigma u} & C}$$
with vertical arrows isomorphisms. Consider the distinguished triangle in $\D(\pi_*S)$
$$\xymatrix{K \ar[r]^{\theta} & D \ar[r] & M \ar[r] & K[1].}$$
Using the isomorphism $\pi_* \circ \R \cong H_*$, we see that $\proj H_*K \leq 1$ and $H_*\theta$ is injective. Hence Lemma 6.2.4 of \cite{Pat12} applies to the latter distinguished triangle and we get a distinguished triangle
$$\xymatrix{\R(K) \ar[r]^-{\R(\theta)} & \R(D) \ar[r] & \R(M) \ar[r] & \R(K[1]) \cong \Sigma \R(K).}$$
Finally the triangulated category axioms imply that $\R(M) \cong X$. \end{proof}

Now Proposition \ref{Rfulfaithful} combined with Proposition \ref{Ressurj} implies Theorem \ref{main} if we take $\M=\Mod R$.

\subsection{Examples}

\begin{exmp} The truncated Brown-Peterson spectrum $BP\langle 2 \rangle$ for a prime $p \geq 5$ satisfies the assumptions of Corollary \ref{maincoro}. Indeed, there is a ring isomorphism
$$\pi_*BP\langle 2 \rangle \cong \Z_{(p)}[v_1, v_2], \;\;\;\;\;\;\;\;\;\; |v_1|=2(p-1),\;\;|v_2| = 2(p^2-1)$$
and $\gl \Z_{(p)}[v_1, v_2] =3$ and $2(p-1) > 5$. Thus, the functor
$$\R \colon \D(\Z_{(p)}[v_1, v_2]) \longrightarrow \Ho(\Mod BP \langle 2 \rangle)$$
is an equivalence of categories. Next, $\Omega^{\infty}BP\langle 2 \rangle$ is not a product of Eilenberg-MacLane spaces by \cite[A.1.7]{Pat12}. Therefore the model categories $\Mod BP \langle 2 \rangle$ and $\Mod \Z_{(p)}[v_1, v_2]$  can not be connected by a zig-zag of Quillen equivalences. In particular, $\R \colon \D(\Z_{(p)}[v_1, v_2]) \longrightarrow \Ho(\Mod BP \langle 2 \rangle)$ is not derived from a zig-zag of Quillen equivalences. \end{exmp}

\begin{exmp} Another ring spectrum to which Corollary \ref{maincoro} applies is the Johnson-Wilson spectrum $E(3)$ for a prime $p \geq 5$. This follows from the ring isomorphism
$$\pi_*E(3) \cong \Z_{(p)}[v_1, v_2, v_3, v_3^{-1}], \;\;\;\;\;\;\;\;\;\; |v_1|= 2(p-1), \;\; |v_2| = 2(p^2-1), \;\; |v_3| = 2(p^3-1),$$
since $\gl \Z_{(p)}[v_1, v_2, v_3, v_3^{-1}]=3$ and $2(p-1) > 5$. Further, as in the previous example, $\Omega^{\infty} E(3)$ is not a product of Eilenberg-MacLane spaces \cite[A.1.8]{Pat12}. Therefore, there is no zig-zag of Quillen equivalences between the model categories $\Mod E(3)$ and $\Mod \Z_{(p)}[v_1, v_2, v_3, v_3^{-1}]$. In particular, the equivalence $\R \colon \D(\Z_{(p)}[v_1, v_2, v_3, v_3^{-1}]) \longrightarrow \Ho(\Mod E(3))$ does not come from a zig-zag of Quillen equivalences.

\end{exmp}

\section{Triangulated equivalences} 

In this section we show that some of the equivalences constructed in \cite{Pat12} are triangulated. More precisely we show that if $R$ is a symmetric ring spectrum such that $\pi_*R$ is $N$-sparse for $N \geq 5$ and $\gl \pi_*R =2$, then the equivalence
\[\R \colon \D(\pi_*R) \longrightarrow \Ho(\Mod R)  \]
is triangulated. This follows using the methods of \cite{Pat16}. We also discuss generalizations of the results of \cite{Pat12, Pat16} and the previous section to abelian categories and twisted chain complexes. At the end as a special case we get the well-known classification of $E(1)$-local spectra at an odd prime \cite{B85, F96}.

\subsection{Triangulated equivalences for module spectra} Let $\M$ be a simplicial stable model category and $S$ a compact generator of $\Ho(\M)$. Suppose that the graded ring $\pi_*S=[S,S]_*$ is $N$-sparse for $N \geq 5$ and the graded global homological dimension of $\pi_*S$ is equal to two. Then using the methods of Franke, the paper \cite{Pat16} constructs functors
$$\R \colon \Ho((\Mod \pi_*S)^{\P}) \to \Ho(\M^{\P})$$ 
for any finite poset $\P$ with $\dimm \P \leq 1$. Here $\dimm \P$ stands for the length of a maximal chain in $\P$. If $\dimm \P \leq 1$, then the global homological dimension of  $(\Grmod \pi_*S)^{\P}$ is at most $3$ (see , \cite{Mit68}). Using this fact, the arguments from \cite[Section 3.1]{Pat16} together with the results of the previous section imply the following:

\begin{prop} \label{diaequiv} Let $\M$ be a simplicial stable model category and $S$ a compact generator of $\Ho(\M)$. Suppose that the graded ring $\pi_*S=[S,S]_*$ is $N$-sparse for $N \geq 5$ and the graded global homological dimension of $\pi_*S$ is equal to $2$. Further let $\P$ be a finite poset such that $\dimm \P \leq 1$. Then the functor
$$\R \colon \Ho((\Mod \pi_*S)^{\P}) \to \Ho(\M^{\P})$$ 
is an equivalence of categories.

\end{prop}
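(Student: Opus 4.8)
The plan is to run, for the Franke-type functor $\R$ associated in \cite{Pat16} to the diagram model category $\M^{\P}$, the fully faithfulness and essential surjectivity arguments of Section \ref{mainsec}. Equip $\M^{\P}$ with the projective model structure; since $\M$ is simplicial and stable and $\P$ is finite, $\M^{\P}$ is again a simplicial stable model category and $\Ho(\M^{\P})$ is compactly generated by the set $\{F_pS\}_{p\in\P}$, where $F_p$ is left adjoint to evaluation at $p$, so that $[F_pS,X]_*\cong\pi_*X_p$. The graded endomorphism ringoid $\E$ of this set is $N$-sparse, and the category of right $\E$-modules is $(\Grmod \pi_*S)^{\P}$; by the result of Mitchell \cite{Mit68} quoted above, the hypothesis $\dimm \P\le 1$ forces $\gl (\Grmod \pi_*S)^{\P}\le 3$, and since $N\ge 5$ this is $<N-1$. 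Thus we sit precisely in the range covered by Proposition \ref{Qequi}, Lemma \ref{presprojtr}, Proposition \ref{Rfulfaithful} and Proposition \ref{Ressurj}, with $\pi_*S$ replaced by $\E$.

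Next I would record that the construction of $\R$ recalled above (the subcategories $\L$ and $\K$ of $\Ho((\M^{\P})^{\C_N})$, the functor $\Q$ with a chosen inverse $\Q^{-1}$, and the homotopy colimit), together with the proofs of Lemma \ref{presprojtr}, Proposition \ref{Rfulfaithful} and Proposition \ref{Ressurj}, uses only the formal properties of a pair consisting of a simplicial stable model category and a compact generating set and of the induced ringoid-valued homotopy functor. Replacing $\pi_*(-)=[S,-]_*$ throughout by $X\mapsto([F_pS,X]_*)_{p\in\P}$, and invoking the diagram versions of the realisation results \cite[Section 5.2]{Pat12} and \cite[Lemmas 6.2.1 and 6.2.4]{Pat12}, these arguments go through verbatim. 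The passage from a single compact generator to a generating set, and in that context the case of homological dimension at most $2$, is exactly what \cite[Section 3.1]{Pat16} carries out; the only extra ingredient needed to allow $\dimm \P\le 1$, where $(\Grmod \pi_*S)^{\P}$ may have homological dimension equal to $3$, is the dimension-three material of Section \ref{mainsec}.

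Granting this, the proposition is immediate. Any object $M$ of $\Ho((\Mod \pi_*S)^{\P})$ is represented by a differential graded object of $(\Grmod \pi_*S)^{\P}$, so $\proj H_*M\le\gl (\Grmod \pi_*S)^{\P}\le 3$; hence the diagram analogue of Proposition \ref{Rfulfaithful} gives that $\R\colon[M,L]\to[\R(M),\R(L)]$ is an isomorphism for every $L$, i.e. $\R$ is fully faithful. Likewise any object $X$ of $\Ho(\M^{\P})$ has $\proj \pi_*X\le 3$, where now $\pi_*X=([F_pS,X]_*)_{p\in\P}$, so the diagram analogue of Proposition \ref{Ressurj} produces an object $M$ with $\R(M)\cong X$, i.e. $\R$ is essentially surjective; therefore $\R$ is an equivalence of categories. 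I expect the main difficulty to lie in the second paragraph: one must check that the octahedral-axiom and five-lemma inductions of Section \ref{mainsec}, and the realisation statements of \cite{Pat12} on which they rest, remain valid when homotopy and homology groups take values in modules over the ringoid $\E$ rather than over the ring $\pi_*S$ — which is precisely the point addressed in \cite[Section 3.1]{Pat16}.
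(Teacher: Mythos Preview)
Your proposal is correct and follows essentially the same approach as the paper: the paper's entire argument for this proposition is the sentence preceding it, which says that $\gl(\Grmod\pi_*S)^{\P}\le 3$ by Mitchell \cite{Mit68}, and that combining this with \cite[Section 3.1]{Pat16} and the dimension-three results of Section~\ref{mainsec} yields the claim. You have unpacked exactly that outline --- the ringoid/generating-set setup for $\M^{\P}$ from \cite{Pat16}, the Mitchell bound, and the application of Propositions~\ref{Rfulfaithful} and~\ref{Ressurj} at homological dimension three --- so nothing further is needed.
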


This proposition now implies that the equivalence $\R \colon \D(\pi_*S) \to \Ho(\M)$ is triangulated. Indeed, it follows from \cite[Propositions 3.2.1-2]{Pat16}, that the functors $\R$ are compatible with the derivator structures. Moreover, they are compatible with homotopy Kan extensions coming from maps of at most one dimensional posets (this follows from the proof of \cite[Proposition 4.2.1]{Pat16}). Since the mapping cone sequences can be described using restrictions and homotopy Kan extensions coming from maps of at most one dimensional posets (\cite[Section 4.1]{Pat16}), we get the following result:

\begin{theo} \label{triaequi} Let $\M$ be a simplicial stable model category and $S$ a compact generator of $\Ho(\M)$. Suppose that $\pi_*S=[S,S]_*$ is $N$-sparse for some $N \geq 5$ and $\gl \pi_*S=2$. Then the functor
$$\R \colon \D(\pi_*S)=\Ho(\Mod \pi_*S) \to \Ho(\M)$$
is a triangulated equivalence.

\end{theo}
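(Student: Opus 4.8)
The plan is to upgrade the equivalence of categories $\R\colon\D(\pi_*S)\to\Ho(\M)$ to an exact equivalence of triangulated categories. Under the present hypotheses $\R$ is indeed an equivalence of categories by Propositions \ref{Rfulfaithful} and \ref{Ressurj}: since $N\ge 5$ we have $\gl\pi_*S=2<N-1$, and every object of $\D(\pi_*S)$ and of $\Ho(\M)$ has homological dimension at most $2$, hence at most $3$, so $\R$ is fully faithful and essentially surjective (and, being an equivalence of additive categories, additive). Recall further that $\R$ comes equipped with the natural suspension isomorphism $\xi\colon\R(M[1])\xrightarrow{\cong}\Sigma\R(M)$ fixed in Section 2. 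Thus everything reduces to showing that the pair $(\R,\xi)$ is exact, i.e.\ sends distinguished triangles to distinguished triangles; once that is done, the quasi-inverse of $(\R,\xi)$ is automatically exact and the theorem follows.

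To prove exactness it suffices to treat the mapping cone triangle $M\xrightarrow{f}M'\to C(f)\xrightarrow{\pa}M[1]$ of an arbitrary chain map $f$ of differential graded $\pi_*S$-modules, since every distinguished triangle in $\D(\pi_*S)$ is isomorphic to one of this form. The key steps, in order, are: (1) by Proposition \ref{diaequiv}, for every finite poset $\P$ with $\dimm\P\le 1$ the functor $\R$ lifts to an equivalence $\R\colon\Ho((\Mod\pi_*S)^{\P})\to\Ho(\M^{\P})$; (2) by \cite[Propositions 3.2.1--2]{Pat16} these lifts are compatible with the restriction functors along arbitrary poset maps, and by the proof of \cite[Proposition 4.2.1]{Pat16} they are compatible, up to coherent isomorphism, with the homotopy left and right Kan extensions attached to maps of posets of dimension at most one; (3) by \cite[Section 4.1]{Pat16}, the mapping cone sequence of a morphism can be assembled---uniformly, in any of the simplicial stable model categories in play---as a finite composite of such restrictions and homotopy Kan extensions; (4) running $f$ through this assembly both in $\Mod\pi_*S$ and in $\M$ and invoking the compatibilities of step (2) yields a coherent isomorphism between $\R$ applied to the mapping cone sequence of $f$ and the mapping cone sequence of $\R(f)$; (5) a short diagram chase identifies the third morphism of the latter with $\xi_M\circ\R(\pa)$, the sign built into $\xi=-\xi'$ being precisely what makes the resulting triangle distinguished rather than anti-distinguished (this is the point of the sign change recorded in Section 2). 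Hence $\R$ sends the mapping cone triangle of $f$ to a distinguished triangle.

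The step I expect to be the main obstacle is the bookkeeping in steps (3)--(5): one must check that every diagram shape entering the construction of the mapping cone sequence genuinely has dimension at most one---so that Proposition \ref{diaequiv} and the Kan-extension compatibilities actually apply---and that the coherence isomorphisms furnished by the statement ``$\R$ respects the relevant derivator structures'' splice together to produce the comparison of triangles with the correct connecting morphism and sign. All of this is available, in exactly the needed form, in \cite{Pat16}, so the argument is a matter of assembling those results. Finally, since $(\R,\xi)$ is an exact functor between triangulated categories which is moreover an equivalence of the underlying categories, it is a triangulated equivalence, which proves the theorem.
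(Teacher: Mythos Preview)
Your proposal is correct and follows essentially the same route as the paper: the paper's proof simply says that once Proposition~\ref{diaequiv} supplies the diagram equivalences over at most one-dimensional posets, the argument of \cite[Theorem 4.2.3]{Pat16} applies verbatim, and you have unpacked exactly that argument by citing the same ingredients (\cite[Propositions 3.2.1--2, Proposition 4.2.1, Section 4.1]{Pat16}) in the same order. Your write-up is in fact more detailed than the paper's, and your identification of the bookkeeping in steps (3)--(5) as the delicate point is apt, though the paper takes it for granted by deferring entirely to \cite{Pat16}.
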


\begin{proof} The proof is exactly the same as that of \cite[Theorem 4.2.3]{Pat16}. The homological condition only enters when concluding that the homotopy categories of diagrams with at most one dimensional posets are equivalent. Once this is ensured, the proof of \cite[Theorem 4.2.3]{Pat16} applies verbatim to the present case.

\end{proof}

\begin{exmp} The truncated Brown-Peterson spectrum $BP\langle 1 \rangle$ for a prime $p \geq 5$ satisfies the assumptions of Theorem \ref{maintr}. Thus, the equivalence
$$\R \colon \D(\Z_{(p)}[v_1]) \longrightarrow \Ho(\Mod BP \langle 1 \rangle)$$
is triangulated for $p \geq 5$. \end{exmp}

\begin{exmp} The Johnson-Wilson spectrum $E(2)$ for a prime $p \geq 5$ also satisfies the conditions of Theorem \ref{maintr}. We get that the equivalence 
$$\R \colon \D(\Z_{(p)}[v_1, v_2, v_2^{-1}]) \longrightarrow \D(E(2))$$ 
is triangulated for $p \geq 5$. 

\end{exmp}

\subsection{Franke's theorem at height one} 

In this subsection we will show how the results of \cite{Pat12, Pat16} and Section \ref{mainsec} can be generalized to abelian categories and twisted chain complexes. As a special case, one gets an algebraic classification of $E(1)$-local spectra at an odd prime. All the constructions here are due to Franke \cite{F96} (see also \cite{B85, R08}). Our results fill the gap which was discussed in the introduction. We skip here details of the proofs as they are identical to those in \cite{Pat12, Pat16} and Section \ref{mainsec}.

Let $\M$ be a simplicial stable model category and $\A$ an abelian category. Throughout this subsection we will assume that $\A$ has enough projectives and finite global homological dimension. We start with a setup for having an Adams spectral sequence in $\Ho(\M)$ based on $\A$. 

Suppose we are given an autoequivalence $[1] \colon \A \to \A$, and $F \colon \Ho(\M) \to \A$ a homological functor with a natural isomorphism $F(\Sigma X) \cong F(X)[1]$. Further assume that the following two conditions hold: 

{\rm (i)} If $C$ is an object of $\Ho(\M)$ and $F(C)$ is projective in $\A$, then the map
$$ F \colon [C,X] \longrightarrow \Hom_{\A}(F(C), F(X))$$
is an isomorphism for any object $X$ of $\Ho(\M)$.

{\rm (ii)} For any projective object $P$ in $\A$, there exists $G$ in $\Ho(\M)$ such that
$$F(G) \cong P$$
holds in $\A$.

\noindent Then there is a bounded convergent spectral sequence
$$E_2^{pq}=\Ext^p_{\A}(F(X), F(Y)[q]) \Rightarrow [X, \Sigma^{p+q}Y].$$ 
Here $[q] \colon \A \to \A$ denotes the iteration of $[1] \colon \A \to \A$ (or of its chosen inverse) (see e.g., \cite{C98}). 

It follows from the spectral sequence that under our conditions, the functor $F$ is conservative. 

Now we repeat the procedure of \cite[Section 3.3]{Pat12}. Suppose we are given a Serre subcategory $\B \subset \A$ which is closed under $[\pm N]$ for $N \geq 2$ and suppose that the functor
$$\bigoplus_{0 \leq i <N}\B \longrightarrow \A, \;\;\;\;\;\; (B_i)_{0 \leq i <N} \mapsto \bigoplus_{0 \leq i <N} B_i[i] $$
is an equivalence of categories. Let $C^{([1],1)}(\A)$ denote the category of \emph{twisted $([1],1)$-chain complexes}. An object of $C^{([1],1)}(\A)$ is just an object $C \in \A$ together with a morphism (the differential)
$$d \colon C \longrightarrow C[1]$$
such that $d[1] \circ d=0$. Note that this category is equivalent to the category of twisted $([N],N)$-chain complexes in $\B$ \cite{F96}. The category $C^{([1],1)}(\A)$ admits a stable model structure where weak equivalences are homology isomorphisms and fibrations are surjections (see e.g., \cite{F96}, \cite{BR11}). Let $\D^{([1],1)}(\A)$ denote the homotopy category of $C^{([1],1)}(\A)$. Then the homology functor
$$H \colon \D^{([1],1)}(\A) \longrightarrow \A$$
satisfies the conditions (i) and (ii) above. This gives an Adams spectral sequence for computing abelian groups of morphisms in $\D^{([1],1)}(\A)$. 

Finally, we are ready to recall the construction of Franke's functor $\R \colon \D^{([1],1)}(\A) \longrightarrow \Ho(\M)$. From now on we assume that all the above conditions are satisfied. Additionally assume that the global homological dimension of $\A$ is less than $N-1$. We will now again use the diagram category $\M^{\C_N}$. For an object $X \in \M^{\C_N}$, denote by 
$$l_i \colon X_{\beta_i} \to X_{\zeta_i}, \;\;\;\;\;\;\; k_i \colon X_{\beta_{i-1}} \to X_{\zeta_i}, \;\;\;\;\;\;\; i \in \Z/N\Z,$$
the structure morphisms of $X$. Consider the full subcategory $\L$ of $\Ho(\M^{\C_N})$ consisting of those diagrams $X \in \Ho(\M^{\C_N})$ which satisfy the following conditions:

\;

\;

\rm (i) The objects $X_{\beta_i}$ and $X_{\zeta_i}$ are cofibrant in $\M$ for any $i \in \Z/N\Z$;

\rm (ii) The objects $F(X_{\beta_i})$ and $F(X_{\zeta_i})$ are contained in $\B[i]$ for any $i \in \Z/N\Z$;

\rm (iii) The map $F(l_i) \colon F(X_{\beta_i}) \to F(X_{\zeta_i})$ is a monomorphism for any $i \in \Z/N\Z$.

\;

\;

As in \cite{Pat12} there is a functor $\Q \colon \L \longrightarrow C^{([1],1)}(\A)$, given by $\Q(X)= (\oplus_{i \in \Z/N\Z} F(\Cone(k_i)),d)$ which is an equivalence of categories. The proof of the fact that $\Q$ is an equivalence is a verbatim translation of the proof in Section 3.3 of \cite{Pat12} and uses the above spectral sequence. The functor $\R \colon \D^{([1],1)}(\A) \longrightarrow \Ho(\M)$ is then defined to be induced by the composite
$$\xymatrix{C^{([1],1)}(\A) \ar[r]^-{\Q^{-1}} & \L \ar[r]^-{\Hocolim} & \Ho(\M).}$$
It follows from the construction of $\R$ that it commutes with suspensions and that $F \circ \R \cong H$, where $H$ is the homology functor. 

The essential facts that fill the gap in Franke's paper for low dimensional cases are the following analogs of \cite[Lemmas 6.2.1 and 6.2.4]{Pat12} and Lemma \ref{presprojtr}. The proofs of the latter statements are axiomatic and also apply to the general context of abelian categories discussed here. 

\begin{lem} \label{lemgap1} Suppose that the global homological dimension of $\A$ is equal to two and $N \geq 4$. Let $B$ and $C$ be objects of $C^{([1],1)}(\A)$ and suppose $B$ has a zero differential. Further let
$$f \colon B \longrightarrow C$$
be a map in $C^{([1],1)}(\A)$ which induces a monomorphism on homology. Then the functor $\R$ sends the mapping cone sequence of $f$ to a distinguished triangle. 

\end{lem}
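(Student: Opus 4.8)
The plan is to mimic, almost verbatim, the proof of Lemma \ref{presprojtr}, replacing $\pi_*$ by $F$, graded $\pi_*S$-modules by objects of $\A$, $\Grmod \pi_*S$ by $\A$, and the differential graded mapping cone in $\D(\pi_*S)$ by the twisted mapping cone in $C^{([1],1)}(\A)$. The first step is a normalization: since $B$ has zero differential, $B = \bigoplus_{0 \le i < N} B_i[i]$ splits according to the decomposition $\A \simeq \bigoplus_{0 \le i < N} \B[i]$, and because $F \circ \R \cong H$ respects the shift and mapping cones can be built coordinatewise, one reduces to the case where $B$ and $C$ live (up to the twist) in a single summand $\B[n]$; this is the analog of the reduction ``$P, Q \in \B[n]$'' in Lemma \ref{presprojtr}. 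Here one should use the Adams spectral sequence construction, or directly the defining equivalences, to find fibrant-cofibrant objects $A, V, \beta$ — I will instead write $A$, $V$, $B'$ — in $\M$ with $F(A) \cong \Ker f\,[\text{shift}]$-free data, more precisely with $F(A) \cong B$, $F(B') \cong C$ as objects of $\B[n]$ (using condition (ii) of the Adams setup), together with cofibrations $\alpha \colon A \to V$ and $\gamma \colon V \to B'$ realizing the epi-mono factorization of $f$ through its image in $\A$. Since $\operatorname{gl\,dim}\A = 2$ and $N \ge 4$, the image and kernel of $f$ have projective dimension $\le 1$ in $\A$, which is exactly what is needed for the diagram $Z$ below to satisfy the low-dimensionality constraints defining $\K$ (here $\K$ is the analog of the subcategory in Proposition \ref{Qequi}, i.e. all of $\L$ since $\operatorname{gl\,dim}\A < N-1$).

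Next, just as in Lemma \ref{presprojtr}, I would form three diagrams in $\L$: $X$ supported at $\zeta_n$ by $A$, $Y$ supported at $\zeta_n$ by $B'$, and $Z$ supported at $\zeta_n, \beta_n, \zeta_{n+1}$ by $B'$, $V$, $\Cone(\alpha)$ respectively with the structure maps $\gamma$ and $j \colon V \to \Cone(\alpha)$. The map $\varphi = \gamma\alpha \colon A \to B'$ induces $\overline{\varphi}\colon X \to Y$, and there are evident maps $Y \to Z$ and $Z \to X^{\#}$, the latter coming from the connecting map $\delta\colon \Cone(\alpha) \to S^1 \wedge A$ and from the identification $F(\Cone(\alpha)) \cong (\Ker f)[\text{shift}]$ from the mapping cone long exact sequence of $\alpha$ together with injectivity of $F(l)$ on the relevant entry. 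By the very definition of the differential on $\Q$ and of the twisted mapping cone in $C^{([1],1)}(\A)$, applying $\Q$ to $X \to Y \to Z \to X^{\#}$ yields (up to isomorphism) the mapping cone sequence of $f$ in $C^{([1],1)}(\A)$. On the other hand, the two pushout squares
$$\xymatrix{A \ar[r]^{\alpha} \ar[d] & V \ar[d]^{j} \ar[r]^{\gamma} & B' \ar[d] \\ CA \ar[r] & \Cone(\alpha) \ar[r] & \Cone(\varphi)}$$
show that $\Hocolim$ carries $X \to Y \to Z \to X^{\#}$ to the mapping cone sequence of $\varphi\colon A \to B'$ in $\M$, which is distinguished in $\Ho(\M)$. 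Composing with the equivalence $\Q^{-1}$ and recalling $\R = \Hocolim \circ \Q^{-1}$, this identifies $\R$ applied to the mapping cone sequence of $f$ with a mapping cone sequence in $\Ho(\M)$, hence a distinguished triangle; the sign twist in the chosen suspension isomorphism $\xi$ is what makes it distinguished rather than anti-distinguished, exactly as in Section 3.5 of \cite{Pat12}.

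The step I expect to be the main obstacle is the first one: producing the fibrant-cofibrant realizations $A$, $V$, $B'$ and the cofibrations $\alpha, \gamma$ with the prescribed behavior under $F$. In \cite{Pat12} this was \cite[Proposition 3.2.1]{Pat12}, whose proof uses that $S$ is a compact generator and that $\pi_*$ detects isomorphisms; in the present abelian-category setting the analog must be extracted from conditions (i) and (ii) of the Adams setup (projective objects of $\B[n]$ are realized by condition (ii), and maps between such realizations are computed by $F$ via condition (i)), plus a factorization-and-replacement argument inside $\M$ to turn an arbitrary map into a composite of cofibrations. This is routine but is the place where the hypotheses are actually consumed, so I would state it as a separate sublemma (the analog of \cite[Proposition 3.2.1]{Pat12}) and then the rest of the argument is a formal transcription of Lemma \ref{presprojtr}. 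Everything else — the long exact sequences, the pushout squares, the identification of $\Q$ of the diagram sequence with the twisted mapping cone — is axiomatic and goes through without change, which is why no further details are given here.
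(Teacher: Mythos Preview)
There is a genuine gap. You have conflated this lemma with Lemma~\ref{lemgap3} (the abelian-category analog of Lemma~\ref{presprojtr}); the present statement is instead the analog of \cite[Lemma~6.2.1]{Pat12}, and its hypotheses are strictly weaker on the target side. In Lemma~\ref{presprojtr} both $P$ and $Q$ have projective homology, so after the standard reduction both may be taken to have zero differential and to lie in a single $\B[n]$. Here only $B$ is assumed to have zero differential; $C$ is an \emph{arbitrary} object of $C^{([1],1)}(\A)$, in general with nonzero differential and with underlying object spread across all summands $\B[i]$. Consequently your reduction ``$B$ and $C$ live (up to twist) in a single $\B[n]$'' is unavailable for $C$, and the sentence ``find $B'\in\M$ with $F(B')\cong C$ as objects of $\B[n]$'' is not well-typed: $F$ lands in $\A$, not in $C^{([1],1)}(\A)$, so a single object of $\M$ cannot realize a genuine chain complex $C$. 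The three-object diagram $Z$ you write down encodes the cone of a map between two objects of $\A$, not the cone of a map into a chain complex.

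The actual argument (as in \cite[Lemma~6.2.1]{Pat12}) must work with the full diagram $Y=\Q^{-1}(C)\in\L$, which has nontrivial $\beta$- and $\zeta$-entries at every $i\in\Z/N\Z$. Since $B$ has zero differential, $X=\Q^{-1}(B)$ is concentrated at the $\zeta$-vertices; the map $\Q^{-1}(f)$ is then a family of maps $X_{\zeta_i}\to Y_{\zeta_i}$. One constructs $Z\in\L$ by enlarging each $Y_{\beta_i}$ so as to absorb the image of $X_{\zeta_i}\to Y_{\zeta_i}$ (via a factorization in $\M$), and the hypothesis that $H(f)$ is a monomorphism is precisely what guarantees that the new structure maps $Z_{\beta_i}\to Z_{\zeta_i}$ still induce monomorphisms under $F$, so that $Z$ remains in $\L$. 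The assumption $\gl\A=2<N-1$ is needed to ensure the relevant projective-dimension bounds on the pieces and to make $\Q$ an equivalence on all of $\L$. Your pushout-square endgame is correct in spirit, but it must be carried out entrywise over all of $\C_N$, not at a single vertex.
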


\begin{lem} \label{lemgap2} Suppose that the global homological dimension of $\A$ is equal to two and $N \geq 4$. Let $g \colon M \longrightarrow N$ be a map in the derived category $\D^{([1],1)}(\A)$ such that the homology $H(M)$ has projective dimension at most one and $H(g)$ is a monomorphism. Then $\R$ sends any distinguished triangle involving $g$ to a distinguished triangle.

\end{lem}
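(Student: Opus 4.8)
The plan is to follow the proof of \cite[Lemma 6.2.4]{Pat12}, with module spectra replaced by $\M$ and graded modules by objects of $C^{([1],1)}(\A)$: one reduces the statement to Lemma~\ref{lemgap1} by resolving $M$, and then patches everything together with the octahedral axiom. First, using the analogue for $C^{([1],1)}(\A)$ of \cite[Lemma 2.2.2]{Pat12} — proved, as there, from condition (ii) and the surjectivity of the Hurewicz maps $[X,Y]\to\Hom_{\A}(H(X),H(Y))$ — I would pick $\sigma\colon P\to M$ in $\D^{([1],1)}(\A)$ with $P$ of zero differential, $H(P)$ projective and $H(\sigma)$ surjective, and complete it to a distinguished triangle $M'\xto{\mu}P\xto{\sigma}M\xto{\pa_0}M'[1]$. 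Surjectivity of $H(\sigma)$ forces $H(\pa_0)=0$ and a short exact sequence $0\to H(M')\to H(P)\to H(M)\to 0$ in $\A$; since $\proj H(M)\le 1$ and $H(P)$ is projective, $H(M')$ is projective, so $M'$ — and hence $M'[1]$ — may again be taken to have zero differential.

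Since $H(\mu)$ is a monomorphism and $M'$ has zero differential, Lemma~\ref{lemgap1} applies to $\mu$; as the mapping cone of $\mu$ is $M$, this says that $\R$ carries $P\xto{\sigma}M\xto{\pa_0}M'[1]\to P[1]$ to a distinguished triangle. Next I would run the octahedral axiom on $P\xto{\sigma}M\xto{g}N$: writing $D=\Cone(g\sigma)$ and $C=\Cone(g)$, it produces a distinguished triangle $M'[1]\xto{f}D\to C\xto{\delta}M'[2]$ whose connecting map $\delta$ equals $\pa_0[1]\circ\pa$. The decisive point is that $H(\pa_0)=0$, whence $H(\delta)=0$, whence $H(f)$ is a monomorphism; as $M'[1]$ has zero differential, Lemma~\ref{lemgap1} applies to $f$ (whose mapping cone is $C$), so $\R$ carries this octahedral triangle to a distinguished triangle.

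It remains to show that $\R$ preserves the triangle on $g\sigma$. For this I would factor $H(g\sigma)$ in $\A$ as $H(P)\twoheadrightarrow I\hookrightarrow H(N)$ with $I=\Img H(g\sigma)$ (using that $H(\sigma)$ is onto and $H(g)$ is injective; here $I\cong H(M)$ and the kernel of $H(P)\to I$ is $H(M')$), realize $I$ by the zero-differential object $J=(I,0)$, lift the surjection to $\tilde p\colon P\to J$ via condition (i), and lift the inclusion to $\tilde\iota\colon J\to N$ using that the Adams spectral sequence of the preceding subsection has no differential into $E_2^{0,0}$, so that the Hurewicz map $[J,N]\to\Hom_{\A}(H(J),H(N))$ is onto; then $\tilde\iota\tilde p=g\sigma$, both maps realizing $H(g\sigma)$ and $H(P)$ being projective. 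Running the octahedral axiom on $P\xto{\tilde p}J\xto{\tilde\iota}N$ and applying Lemma~\ref{lemgap1} to the map $\Cone(\tilde p)[-1]\to P$, to $\tilde\iota$, and to the first map of this second octahedron's octahedral triangle (again a monomorphism on homology, since the relevant connecting map is killed by $H$ — its factor $J\to\Cone(\tilde p)$ already is), one finds that $\R$ preserves three of this octahedron's four triangles, hence — by the triangulated-category diagram chase of \cite{Pat12} — also the fourth, which is the triangle on $g\sigma$. Feeding this back into the octahedron of $P\xto{\sigma}M\xto{g}N$, $\R$ now preserves its triangles on $\sigma$ and on $g\sigma$ and its octahedral triangle, and the same diagram chase forces $\R$ to preserve the remaining triangle, the one on $g$. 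Since every distinguished triangle involving $g$ is isomorphic, after rotation, to this one and $\R$ commutes with suspension, the lemma follows.

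The homology long-exact-sequence computations that make Lemma~\ref{lemgap1} applicable at each step are routine. I expect the real work to be the two appeals to the principle that, if $\R$ carries three of the four triangles of an octahedron to distinguished triangles, then it carries the fourth there too — a manipulation purely within the axioms of triangulated categories that requires keeping careful track of the suspension isomorphism $\xi$, and of the sign fixed in its definition, so that one lands on distinguished rather than anti-distinguished triangles. All of this is exactly the argument of \cite[Lemma 6.2.4]{Pat12}, together with the triangulated manipulations appearing in the proofs of Propositions~\ref{Rfulfaithful} and~\ref{Ressurj}, and it transports to the present setting without change.
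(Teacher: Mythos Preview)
Your proposal is correct and takes essentially the same approach as the paper: the paper gives no independent argument for Lemma~\ref{lemgap2} but explicitly identifies it as the analog of \cite[Lemma~6.2.4]{Pat12} and states that ``the proofs of the latter statements are axiomatic and also apply to the general context of abelian categories discussed here,'' which is precisely the transport you outline. Your identification of the two octahedral reductions to Lemma~\ref{lemgap1}, and of the delicate ``three out of four'' triangulated manipulation (with attention to the sign in $\xi$) as the point requiring care, matches the structure of the argument in \cite{Pat12} that the paper is invoking.
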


\begin{lem} \label{lemgap3} Distinguished triangles having at least two objects with projective homology are send to distinguished ones by the functor $\R$.

\end{lem}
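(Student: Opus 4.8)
The plan is to mirror the proof of Lemma \ref{presprojtr}, whose argument is purely axiomatic, and transport it through the equivalence $\Q \colon \L \to C^{([1],1)}(\A)$. So suppose we are given a distinguished triangle
\[\xymatrix{P \ar[r]^-f & Q \ar[r]^-i & D \ar[r]^-{\pa} & P[1]}\]
in $\D^{([1],1)}(\A)$ with $H(P)$ and $H(Q)$ projective in $\A$. Using the analog of \cite[Lemma 2.2.2]{Pat12} for twisted chain complexes (which holds verbatim since $\A$ has enough projectives and finite global dimension), I would first reduce to the case where $P$ and $Q$ have zero differential and $f$ is an honest morphism in $C^{([1],1)}(\A)$; moreover, using the splitting $\bigoplus_{0\le i<N}\B \simeq \A$, I can arrange that $P$ and $Q$ both lie in $\B[n]$ for a single $n$. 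Then it suffices to show that $\R$ carries the algebraic mapping cone sequence of $f$ to a distinguished triangle.

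Next I would realize the relevant data inside $\M^{\C_N}$, exactly as in Lemma \ref{presprojtr} but with condition (ii) of Proposition \ref{Qequi} replaced by its abelian analog. Using that $H \colon \D^{([1],1)}(\A) \to \A$ satisfies conditions (i) and (ii) above (which is the analog of \cite[Proposition 3.2.1]{Pat12}), I choose fibrant-cofibrant objects $A$, $V$, $B$ of $\M$ with $F(A)\cong P$, $F(B)\cong Q$, $F(V)\cong \Img f$, together with cofibrations $\al\colon A\to V$ and $\gm\colon V\to B$ realizing $P\twoheadrightarrow\Img f$ and $\Img f\hookrightarrow Q$. The mapping cone sequence of $\al$ identifies $F(\Cone(\al))$ with $(\Ker f)[1]$, and the connecting map with the inclusion $(\Ker f)[1]\hookrightarrow P[1]$. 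Since $P$ and $Q$ are projective and $\gl\A < N-1$, the objects $\Img f$ and $\Ker f$ have projective dimension $< N-1$, so the diagram $Z$ built from $Z_{\zeta_n}=B$, $Z_{\beta_n}=V$, $Z_{\zeta_{n+1}}=\Cone(\al)$ lands in $\L$ (and its image under $\Q$ is the desired cone up to isomorphism). The objects $X$ (supported at $\zeta_n$ by $A$) and $Y$ (supported at $\zeta_n$ by $B$) also lie in $\L$, and the pushout squares
\[\xymatrix{A \ar[r]^{\al} \ar[d] & V \ar[d]^{j} \ar[r]^{\gm} & B \ar[d] \\ CA \ar[r] & \Cone(\al) \ar[r] & \Cone(\vf)}\]
show that $\Hocolim$ sends $X\to Y\to Z\to X^\#$ to the mapping cone sequence of $\vf=\gm\circ\al$, while $\Q$ sends it to the algebraic mapping cone sequence of $f$. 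Composing, $\R$ carries the latter to a mapping cone sequence in $\M$, hence to a distinguished triangle.

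The remaining point, as in Lemma \ref{presprojtr}, is bookkeeping of signs: one must check that the suspension isomorphism $\xi \colon \R(X[1])\cong\Sigma\R(X)$ fixed in the construction (again with the sign change $\xi=-\xi'$) makes $\R$ send distinguished triangles, not anti-distinguished ones, to distinguished triangles. Since the definition of $\R$, the functor $(-)^\#$, and the natural isomorphism $\Q(X^\#)\cong\Q(X)[1]$ are all verbatim translations of the module-spectra case, this sign analysis goes through unchanged. I expect the main obstacle to be purely organizational: verifying that every input from \cite{Pat12} used in the proof of Lemma \ref{presprojtr} — the cofibrant-replacement lemma, the realization result \cite[Proposition 3.2.1]{Pat12}, and the identification of $\Q$ on the relevant diagrams — has a genuine counterpart in the abelian setting, and that these counterparts rest only on the axiomatic hypotheses (i), (ii), the existence of the Adams spectral sequence, and $\gl\A<N-1$, rather than on anything specific to $\pi_*S$-modules. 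Once that is in place, no new idea is needed.
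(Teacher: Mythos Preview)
Your proposal is correct and is exactly the paper's approach: the paper explicitly states that the proofs of Lemmas~\ref{lemgap1}--\ref{lemgap3} are verbatim translations of \cite[Lemmas 6.2.1, 6.2.4]{Pat12} and Lemma~\ref{presprojtr} to the abelian setting, and you have carried out that translation for Lemma~\ref{presprojtr}. One small slip: the realization of $A$, $V$, $B$ in $\M$ uses that $F\colon \Ho(\M)\to\A$ (not $H$) satisfies conditions (i) and (ii), and realizing the non-projective object $\Img f$ requires the full analog of \cite[Proposition~3.2.1]{Pat12} (built from (i), (ii), and the Adams spectral sequence via a short inductive argument), not just conditions (i) and (ii) directly.
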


Using these facts we arrive to the following statement (See \cite[Section 6.3]{Pat12} and Section \ref{mainsec} for the proof):

\begin{theo} \label{Franketheo1} Under the above assumptions the functor $\R \colon \D^{([1],1)}(\A) \longrightarrow \Ho(\M)$ is an equivalence of categories if the global homological dimension of $\A$ is equal to two and $N \geq 4$ or if the global homological dimension of $\A$ is equal to three and $N \geq 5$.
\end{theo}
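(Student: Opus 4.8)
The plan is to re-run the arguments of Section~\ref{mainsec} (for the three-dimensional case) and of \cite[Section~6.3]{Pat12} (for the two-dimensional case) essentially verbatim, under the evident dictionary: one replaces $[S,-]_*$ by the homological functor $F$, the derived category $\D(\pi_*S)$ by $\D^{([1],1)}(\A)$, the derived category of graded modules by the category of twisted $([1],1)$-chain complexes, projective dimension in $\Grmod\pi_*S$ by projective dimension in $\A$, and Lemma~\ref{presprojtr} together with \cite[Lemmas~6.2.1 and 6.2.4]{Pat12} by Lemmas~\ref{lemgap3}, \ref{lemgap1} and \ref{lemgap2}. What makes this legitimate is that all of those arguments are formal: they use only the octahedral axiom, the Five lemma, the existence of ``projective resolutions'' in the derived category (here \cite[Lemma~2.2.2]{Pat12} applied to $C^{([1],1)}(\A)$, valid because $\A$ has enough projectives), the natural isomorphism $F\circ\R\cong H$, the full faithfulness of $\R$ on objects with projective homology, and the three transport lemmas above. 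Since $\gl\A$ equals $3$ (resp.\ $2$), every object $M$ of $\D^{([1],1)}(\A)$ has $\proj H(M)\le 3$ (resp.\ $\le 2$) and every object $X$ of $\Ho(\M)$ has $\proj F(X)\le 3$ (resp.\ $\le 2$) because $F$ lands in $\A$; hence it suffices to show that $\R$ is full, faithful, and essentially surjective onto objects of this bounded projective dimension.

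First I would establish full faithfulness, imitating Proposition~\ref{Rfulfaithful}. The base case --- that $\R\colon[C,L]\to[\R(C),\R(L)]$ is an isomorphism whenever $F(C)\cong H(C)$ is projective --- is immediate from condition~(i) of the Adams spectral sequence setup applied to both $H$ on $\D^{([1],1)}(\A)$ and $F$ on $\Ho(\M)$, together with $F\circ\R\cong H$. One then resolves: choose $\sigma\colon P\to M$ with $H(P)$ projective and $H(\sigma)$ an epimorphism, complete to a distinguished triangle, observe that the long exact homology sequence forces the connecting map to vanish on homology, and iterate, lowering $\proj$ by one at each stage. In the case $\gl\A=3$ this yields objects $M'$, $M''$ with $\proj H(M')\le 2$, $\proj H(M'')\le 1$; one applies Lemma~\ref{lemgap3} to the triangle with two projective-homology terms and Lemma~\ref{lemgap2} to the triangle through $M''[1]$, interleaving two Five lemma arguments to propagate the isomorphism from the projective case up to $\proj H(M)\le 3$. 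In the case $\gl\A=2$ one resolution step fewer is needed and Lemma~\ref{lemgap1} takes over at the last stage; this is precisely the argument of \cite[Section~6.3]{Pat12}. (The global-dimension hypotheses recorded in Lemmas~\ref{lemgap1} and \ref{lemgap2} are the minimal ones needed in the principal application; since the proofs are axiomatic and use only the stated projective-dimension conditions on the objects of the triangle, they remain valid in the ambient regime $\gl\A<N-1$, in particular for $\gl\A=3$, $N\ge5$.)

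Essential surjectivity mirrors Proposition~\ref{Ressurj}. Given $X\in\Ho(\M)$ with $\proj F(X)\le 3$ (resp.\ $\le 2$), I would build the octahedral diagram resolving $X$ by objects with projective $F$-homology, use condition~(ii) to realize the needed homology objects and the base-case full faithfulness to lift the relevant maps to $\D^{([1],1)}(\A)$, transport the pertinent distinguished triangles along $\R$ using Lemmas~\ref{lemgap3} and \ref{lemgap2} (or \ref{lemgap1} in the two-dimensional case), and conclude $\R(M)\cong X$ from the triangulated-category axioms. Combining full faithfulness with essential surjectivity gives that $\R$ is an equivalence in each of the two regimes.

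The hard part is not in this deduction --- which is pure diagram chasing once the inputs are in place --- but in the three transport Lemmas~\ref{lemgap1}--\ref{lemgap3} themselves; the excerpt already asserts that their proofs are axiomatic translations of \cite[Lemmas~6.2.1 and 6.2.4]{Pat12} and Lemma~\ref{presprojtr}. Accordingly the main thing to watch is bookkeeping: one must confirm that every place where the module-spectrum proofs invoked a structural feature of $\Mod R$ --- the realization of graded modules by fibrant-cofibrant objects with prescribed homotopy (\cite[Proposition~3.2.1]{Pat12}), the fact that cofibrant differential graded modules are underlying-projective (\cite[Remark~2.2.5]{H97}), the long exact sequences of mapping cones, the equivalence provided by $\Q$ --- has a genuine counterpart here, namely conditions~(i) and (ii) of the Adams spectral sequence setup, the equivalence $\Q\colon\L\to C^{([1],1)}(\A)$, and the stable model structure on $C^{([1],1)}(\A)$. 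I expect no surprises, consistent with the paper's choice to omit the details.
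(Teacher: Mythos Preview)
Your proposal is correct and matches the paper's own approach exactly: the paper gives no argument beyond pointing to \cite[Section~6.3]{Pat12} and Section~\ref{mainsec}, and you have spelled out precisely that translation, including the correct identification of Lemmas~\ref{lemgap1}--\ref{lemgap3} as the abelian-category analogs of the transport lemmas. Your remark that the hypotheses of Lemmas~\ref{lemgap1} and \ref{lemgap2} are stated for the two-dimensional regime but that the axiomatic proofs go through under $\gl\A<N-1$ is well taken and is indeed how the paper uses the corresponding module-spectrum lemmas in Section~\ref{mainsec}.
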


It follows from the latter theorem that if the global homological dimension of $\A$ is equal to two and $N \geq 5$, then we also have an equivalence
$\R \colon \D^{([1],1)}(\A^{\P}) \longrightarrow \Ho(\M^{\P})$
for any at most one dimensional poset $\P$. These equivalences are compatible with the derivator structures. Now repeating the arguments of the proof of \cite[Theorem 4.2.3]{Pat16}, we obtain:

\begin{theo} \label{Franketheo2} Under the above assumptions the functor $\R \colon \D^{([1],1)}(\A) \longrightarrow \Ho(\M)$ is a triangulated equivalence if the global homological dimension of $\A$ is equal to two and $N \geq 5$.
\end{theo}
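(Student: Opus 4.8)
The plan is to deduce that $\R$ is triangulated from the statement that it induces equivalences on homotopy categories of diagrams indexed by at most one-dimensional posets, following verbatim the strategy of the proof of \cite[Theorem 4.2.3]{Pat16}. The first step is to promote Franke's construction to diagram categories. For a finite poset $\P$, the functor category $\A^\P$ is again abelian with enough projectives and finite global homological dimension, the autoequivalence $[1]$ and the homological functor $F$ extend levelwise to $F \colon \Ho(\M^\P) \to \A^\P$, and $\B^\P \subset \A^\P$ remains a Serre subcategory for which the splitting hypothesis holds. Conditions (i) and (ii) of the present subsection for the pair $(F,\A^\P)$ follow from those for $(F,\A)$, since projectives in $\A^\P$ are assembled from projectives of $\A$ along $\P$; this is exactly the verification carried out in \cite[Section 3.1]{Pat16}. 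Hence one obtains an Adams spectral sequence in $\Ho(\M^\P)$ based on $\A^\P$ and, whenever $\gl \A^\P < N-1$, a functor $\R \colon \D^{([1],1)}(\A^\P) \to \Ho(\M^\P)$. If moreover $\dimm \P \leq 1$, then $\gl \A^\P \leq 3$ by \cite{Mit68}, so since $N \geq 5$ the abelian analog of Theorem \ref{Franketheo1} (together with the classical low-dimensional cases) shows this $\R$ is an equivalence of categories; the case $\P = \ast$ recovers the equivalence asserted in the theorem.

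The second step records the derivator-theoretic compatibilities. Mimicking \cite[Propositions 3.2.1-2]{Pat16}, the functors $\R$ commute with restriction along poset maps up to coherent natural isomorphism, so they assemble into a morphism of prederivators. Then, reproducing the argument in the proof of \cite[Proposition 4.2.1]{Pat16}, one checks that for a map $u \colon \P \to \Q$ of at most one-dimensional posets the functors $\R$ are compatible with the homotopy Kan extensions $u_!$ and $u_*$; the dimension hypothesis is used precisely to keep every diagram category that appears within the scope of Theorem \ref{Franketheo1}, so that the relevant $\R$'s in the comparison are known to be equivalences.

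Finally, by \cite[Section 4.1]{Pat16} the mapping cone sequence of a morphism, both in $\D^{([1],1)}(\A)$ and in $\Ho(\M)$, can be expressed purely in terms of restrictions and homotopy Kan extensions along maps of at most one-dimensional posets. Combining this description with the compatibilities just established shows that $\R$ carries cofiber sequences to cofiber sequences; since $\R$ already commutes with suspension via the canonical isomorphism coming from its construction, it is an exact functor, and being an equivalence of categories it is a triangulated equivalence. The main obstacle is the second step: faithfully transporting the derivator and Kan-extension bookkeeping of \cite{Pat16} from module spectra to the present abelian-category framework, and checking that each auxiliary diagram category still satisfies the global-dimension bound needed to invoke Theorem \ref{Franketheo1}; once these compatibilities are in hand, the conclusion is formal.
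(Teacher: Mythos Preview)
Your proposal is correct and follows essentially the same approach as the paper: extend $\R$ to diagram categories over at most one-dimensional posets using Mitchell's bound $\gl \A^{\P} \leq 3$ together with Theorem~\ref{Franketheo1}, verify compatibility with the derivator structures and homotopy Kan extensions as in \cite[Propositions 3.2.1--2 and 4.2.1]{Pat16}, and then invoke the description of mapping cone sequences from \cite[Section 4.1]{Pat16} to conclude exactness. Your write-up in fact supplies more detail than the paper, which simply points to the proof of \cite[Theorem 4.2.3]{Pat16} after noting the diagram-level equivalences.
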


\begin{exmp} Let $R$ be a symmetric ring spectrum such that $\pi_* R$ is concentrated in degrees divisible by a natural number $N \geq 5$ and assume that the graded global homological dimension of $\pi_*R$ is equal to two. Then the model category $\Mod R$, the abelian category $\Grmod \pi_*R$ and the functor $\pi_* \colon \Ho(\Mod R) \longrightarrow \Grmod \pi_*R$ satisfy the conditions of Theorem \ref{Franketheo2}. 

\end{exmp}

\begin{exmp} Let $L_1\Sp$ denote the model category of $E(1)$-local spectra at an odd prime $p$. One has the homology theory
$$E(1)_* \colon \Ho(L_1\Sp) \to E(1)_*E(1)-\Comod.$$
The category $E(1)_*E(1)-\Comod$ is an abelian category with enough injectives and its global cohomological dimension is equal to two \cite{B85}. By passing to the dual categories, we see that the conditions of Theorem \ref{Franketheo1} are satisfied (here $N=2(p-1)$). Consequently one gets that the functor
$$\R \colon \D^{([-1],1)}((E(1)_*E(1)-\Comod)^{op}) \longrightarrow \Ho(L_1\Sp)^{op}$$
is an equivalence of categories. Moreover if $p \geq 5$, by Theorem \ref{Franketheo2}, the functor $\R$ is a triangulated equivalence. By dualizing back one obtains that 
$$\R^{op} \colon \D^{([1],1)}(E(1)_*E(1)-\Comod) \longrightarrow \Ho(L_1\Sp)$$
is an equivalence of categories and for $p \geq 5$ a triangulated equivalence. 
%
%
The functor $\R^{op}$ coincides with the functor constructed by Franke. The $op$ shows up here because Franke uses injective objects and coimages to construct his functor, whereas we used projective objects and kernels. The question, whether the equivalence is triangulated in the case $p=3$, remains open.
\end{exmp}

\vspace{0.5cm}

\noindent Mathematisches Institut \\ Universit\"at Bonn \\Endenischer Allee 60 \\ 53115 Bonn \\ Germany

\;

\;

\noindent E-mail address: \texttt{irpatchk@math.uni-bonn.de}

\end{document}